%% file: r1_arxiv.tex
\documentclass[10pt]{article}

\usepackage[left=1.5cm,right=1.5cm,top=1.5cm,bottom=1cm,includeheadfoot]{geometry}
\usepackage{graphicx,xcolor}
\usepackage{amsthm,amssymb,amsmath,amsfonts}
\usepackage[bb=libus]{mathalpha}
\usepackage{comment, subfiles}
\usepackage{subcaption}
\usepackage{tikz}
\usetikzlibrary{arrows.meta,patterns}
\usepackage{booktabs}
\usepackage{multirow}
\usepackage{eufrak}
\usepackage{url} 

\input{defs}
\newtheorem{theorem}{Theorem}
\newtheorem{lemma}{Lemma}

\newcommand{\answer}[1]{\textcolor{black}{#1}}

\title{Splitting-strategies for arbitrary-order fully mixed finite element discretizations\\of the Biot equations}

\author{
  Fleurianne Bertrand\thanks{Numerical Analysis of Partial Differential Equations, TU Chemnitz, Germany}
  \and
  Jakub Wiktor Both\thanks{Department of Mathematics, University of Bergen, Norway; Department of Physics and Technology, University of Bergen, Norway}
  \and
  Tugay Dağlı\thanks{Numerical Analysis of Partial Differential Equations, TU Chemnitz, Germany}
}

\date{} 

\begin{document}
\maketitle

\begin{abstract}
We study the fully-mixed formulation of the Biot equations which is characterized by a symmetric coupling between flow and deformation \answer{while each subphysics has internally a saddle point structure}. This enables the use of stable mixed finite elements for each subproblem without a strong compatibility condition across the two subphysics. 
To exploit this flexibility while preserving the conservation structure of both subproblems, we consider fully mixed finite element methods in which the symmetry of the elastic stress tensor is enforced weakly \answer{often related to as five-field formulation}.
The resulting mixed formulation exhibits an overarching saddle-point structure whose stability is determined by suitable inf–sup conditions. Inf–sup stability is established for several families of discrete spaces of arbitrary order, leading to optimal a priori error estimates.
Iterative splitting strategies following the classical fixed-stress split with additional tuning are specifically investigated for the fully mixed formulation with proof of convergence and rates depending on the coupling strength. Contrary to previous analyses on coupled problems with a symmetric structure, we theoretically prove the efficacy of negative stabilization, consistent with Schur-complement ideas. Numerical results based on analytical solutions and the classical \answer{2D Mandel problem and a 3D footing problem} are presented supporting the theory.
\end{abstract}

\textbf{Keywords:} 
Biot equations, mixed finite elements, weak symmetry, error analysis, iterative coupling, stabilization


\section{Introduction}
\label{sec:intro}

The two-way coupling between fluid flow and deformation of porous materials, commonly referred to as poromechanics, is a key component in a wide range of engineering applications, including subsurface energy and storage technologies, geomechanics, biomechanics, and material processing.
From a computational viewpoint, poromechanics models exhibit pronounced multi-physics and multi-parameter features, so that robustness with respect to physical parameters and discretization choices, and {reliability} via error control are central requirements for predictive simulations.
This perspective has motivated extensive, recent research on stable discretizations, parameter-robust solvers and preconditioners, and adaptive strategies driven by a posteriori error estimates; see, e.g., the overview and motivation in \cite{BertrandErnRadu:2021} as well as references therein. The most widely used mathematical model in this context is the linear, quasi-static Biot consolidation system \cite{Biot:1941,Biot:1955}, which couples mechanical equilibrium of the solid skeleton with mass conservation of the pore fluid.

\answer{The Biot equations admit a variety of formulations and discretizations. Coupling elasticity and porous media flow, both can be written in primal and dual form with suitable primary variables.}
A classical approach \answer{to formulate the Biot equations} is based on a two-field saddle-point formulation in displacement and pore pressure, for which stable conforming finite element choices such as the Taylor--Hood pair yield optimal a priori error estimates under standard assumptions, see \cite{MuradLoula:1992}. \answer{The saddle-point coupling, originating in Stokes-like coupling terms, can be also numerically stabilized through some form of pressure-stabilization~\cite{rodrigo2016stability}. However, if not intrinsically stable, a common drawback of discretizations of} this formulation is that stability and accuracy may deteriorate in regimes with incompressible constituents, or strongly varying parameters~\cite{haga2012causes}, in addition to lack of conservation qualities for mass and linear momentum, prompting the development of alternative formulations designed to be more robust.

A natural remedy is to employ mixed formulations \answer{for the two subphysics}, where additional variables for the Darcy velocity and/or stress are introduced and approximated directly. Moreover, they allow for schemes that honor the conservation properties. As a result, a range of equivalent formulations of the Biot equations exist and have been studied both in terms of discretizations and efficient numerical solution. 
With a strong focus on the approximation of the flow problem, the three-field formulation of poromechanics, built on a mixed formulation of the flow problem and adding the Darcy velocity as unknown, highlights local mass conservation and introduces a double-saddle point structure; see, e.g., \cite{PhillipsWheeler:2007}. \answer{Again, pressure stabilization is needed to ensure robustness in the presence of Stokes-type coupling~\cite{rodrigo2018new,camargo2021macroelement}.} On the contrary, local conservation of linear momentum can be achieved through a mixed \answer{(stress-based)} formulation of the elasticity problem, where the symmetry of the total Cauchy stress can be enforced strongly~\cite{ArnoldWinther:2002} or  weakly~\cite{FraeijsdeVeubeke1975,ArnoldFalkWinther:2007,guzman2010unified,quinelato2019full}, where for the former the construction of associated approximate function spaces considerably restricts the range of admissible discretizations and increases the computational effort. 

\answer{Fully-mixed formulations employ mixed formulations for both the flow and elasticity subproblems and provide a framework for preserving the fundamental conservation laws of poromechanics. Combining a mixed Darcy formulation of flow with a weakly symmetric stress-based formulation of elasticity yields five primary unknowns and thus the so-called five-field formulation of Biot's equations~\cite{lee2016robust,baerland2017weakly}. By introducing fluxes and stresses as primary variables, local mass conservation and linear momentum balance are enforced directly at the discrete level~\cite{Yi:2014}. At the same time, the flow and elasticity subproblems retain their respective saddle-point structures, allowing stable discretizations for the individual subproblems to be combined in a modular fashion to obtain stable schemes for the coupled Biot system with provable a priori error estimates~\cite{lee2016robust,ambartsumyan2020coupled,nordbotten2016stable}. This flexibility arises from the nature of the poromechanical coupling. In stress-based formulations, by using the constitutive law for effective stress, the coupling is expressed through volumetric stress and pore pressure in contrast to the classical displacement--pressure formulation. This reformulation, effectively results in a symmetric elasticity--flow coupling. This symmetry is closely related to the underlying gradient flow structure of poromechanics~\cite{both2019gradient}, which admits a block-separable minimization principle in terms of stress and pressure without additional constraints linking the two subphysics. The fully mixed formulation is furthermore particularly attractive for a posteriori error estimation. Since physically relevant stresses and fluxes are computed directly as primary variables and satisfy local conservation properties, equilibrated quantities are readily available. This enables the construction of Prager--Synge type estimates by combining these fields with reconstructed kinematically admissible displacement and pressure variables; see, e.g.,~\cite{ahmed2019adaptive,ahmed2020adaptive,RiedlbeckDiPietroErnGranetKazymyrenko:2017}.}

\answer{The iterative solution of poromechanics by decoupling flow and elasticity is a common solution strategy, both as a stand-alone fixed-point method~\cite{kim2011stability} and as the basis for parameter-robust preconditioning of Krylov subspace methods~\cite{white2016block}. The most widely used iterative coupling scheme is the fixed-stress split; we emphasize that alternative parameter-robust preconditioners have also been developed for specific discretizations, including the five-field formulation~\cite{baerland2017weakly}. The fixed-stress split is based on consecutively solving the flow problem at fixed volumetric stress and the elasticity problem at fixed pressure~\cite{settari1998coupled}. In its original setting of a two-field formulation (also true for three-field formulations), this physically motivated iteration requires a stabilization of the flow problem to ensure unconditional robustness when decoupling the underlying saddle-point structure~\cite{kim2011stability}. In contrast, for the five-field formulation, the iteration separates a block-symmetric coupling between stress and pressure variables. As a consequence, a naive decoupling of the two subphysics suffices and, in fact, coincides with the physical interpretation of the fixed-stress split~\cite{jayadharan2021domain,ahmed2020adaptive}, fixing the stress and pressure variables during respective flow and elasticity solves. In particular, the fixed-stress iteration can be identified with an alternating minimization scheme for the underlying variational principle~\cite{both2019gradient}, for which convergence follows from abstract minimization theory~\cite{both2022rate} in line with problem-specific analysis~\cite{jayadharan2021domain}. On the other hand, problem-specific analyses of the two- and three-field formulations have revealed potential to tune convergence properties through a careful choice and optimization of the explicitly employed stabilization~\cite{mikelic2013convergence,BothBorregalesNordbottenKumarRadu:2017,storvik2019optimization}. Yet, comparable analyses aimed at tuning convergence for the fully mixed formulation, with its symmetric elasticity--flow coupling, have not been presented.} We, however, highlight the study of other symmetric problems employing stabilization and in general advocating for (positive) stabilization~\cite{nuca2024splitting,brun2020iterative}, while intuitively a Schur-complement approach would instead suggest a (negative) destabilization. The latter idea has been successfully applied for an alternative fully dynamic poromechanics model with a symmetric coupling between displacement and flux variables~\cite{both2022iterative}, but not for the classical Biot equations.

In this work, we aim to highlight both the flexibility in designing spatial approximations as well as the potential of tuning convergence of the fixed-stress split by careful destabilization (negative stabilization), when considering the five-field formulation of the Biot equations. 
We emphasize, consistent with the symmetric coupling of the two subproblems as highlighted above, that inf-sup stability of the five-field formulations does not require a compatibility condition across the coupling of flow and deformation, resulting in an entire family of discretization schemes similar to the combination of classical Arnold-Falk-Winther and Raviart-Thomas elements, cf. ~\cite{ahmed2020adaptive,ambartsumyan2020coupled}. In addition, we define a stabilized splitting scheme, iteratively coupling flow and deformation. It is inspired by the fixed-stress split, but suggests negative stabilization $L^2$-type. The strength of destabilization is assessed through a theoretical convergence analysis, resulting in a simple expression which is consistent with previous analyses of the two-field formulation~\cite{mikelic2013convergence,BothBorregalesNordbottenKumarRadu:2017,storvik2019optimization}.

The remainder of the paper is organized as follows. In section \ref{sec:porousmedia} and section~\ref{sec:porousmediamixed}, the fully-mixed formulation of the Biot equations is presented in strong and weak formulation, resembling the five-field formulation with enforced weak symmetry of stress tensors. This structure allows for the use of standard $\Hdiv$-conforming finite element spaces for the stress and Darcy velocity variables and avoids the need for symmetric stress finite element spaces. Based on this framework, we list families of conforming mixed finite element discretizations of arbitrary order in section \ref{sec:fem}. In section \ref{sec:iterativecoupling}, we investigate the corresponding iterative solution strategies for the resulting coupled saddle-point systems and analyze the behavior of a tuned fixed-stress splitting scheme. \answer{Numerical experiments, including tests based on a manufactured solution, the classical 2D Mandel benchmark, and a 3D footing problem}, illustrate the theoretical findings in section \ref{sec:numerics}. The paper is closed with concluding remarks in section~\ref{sec:conclusion}.

\section{Stress-velocity-based porous media equations}
\label{sec:porousmedia}

We focus on the quasi-static Biot's model for three-dimensional consolidation, where the porous medium is characterized as linearly elastic, homogeneous, isotropic, and filled with a Newtonian fluid~\cite{Biot:1941,Biot:1955,coussy2004poromechanics}. 
The consolidation process is governed by a system of partial differential equations that integrate the dynamics of the fluid flow \answer{governed by the pore pressure $p$ and Darcy flux $\velocity$} with the elastic deformation $\bf u$ of the solid framework \answer{accompanied with Cauchy stress $\totalstress$}. \answer{We consider stress–velocity formulations which have previously been introduced in the literature~\cite{Yi:2014}.}

\subsection{\answer{Notation}}
\answer{Before presenting the model, we briefly introduce the notation used throughout the paper. Let $\Omega \subset \mathbb{R}^d$ ($d=2,3$) denote a connected domain with Lipschitz boundary. Let $\boldsymbol{n}$ denote the outward unit normal vector onto the boundary $\partial\Omega$. Scalar-valued functions are denoted by standard letters (e.g. $\testpressure$), vector-valued functions by bold letters (e.g. $\testvelocity$), and tensor-valued functions by bold underlined letters (e.g. $\teststress$) -- similarly for custom function spaces. Furthermore, we employ calligraphic letters to denote collections of variables and function spaces whenever a compact block notation is convenient. Motivated by the divergence-conforming formulations considered in this work, we introduce the corresponding function spaces. Let $L^2(\Omega)$ denote the space of square-integrable scalar-valued functions on $\Omega$ with associated norm $\|\cdot\|_0$ and inner-product $(\cdot,\cdot)$. Moreover, let $H^1(\Omega) := \left\{ v \in L^2(\Omega) \;:\; \nabla v \in L^2(\Omega)^d \right\}$, denote the space of functions with square-integrable weak gradient. We use $\div$ to denote the weak divergence operator, and define $\hdiv := \left\{ \mathbf{v}\in L^2(\Omega)^d \;:\; \div \mathbf{v}\in L^2(\Omega) \right\}$, the space of vector-valued functions with square-integrable weak divergence. Moreover, $\hdiv^d := \left\{ \teststress\in L^2(\Omega)^{d\times d} \;:\; \text{each row of }\teststress\text{ belongs to }\hdiv \right\}$, denotes the corresponding space of tensor-valued functions $\teststress$, endorsed with the $\hdiv^d$ norm (similarly for vector valued-functions)
\[
  \|\teststress\|_{H(\div)}^2 := \|\teststress\|_0^2 + \|\div\teststress\|_0^2.
\]
Finally, $\bI$ denotes the identity tensor, and for a general tensor field
$\teststress=(\tau_{ij})$, its trace is $\tr\teststress := \sum_{i=1}^{d}\tau_{ii}$.
}

\subsection{Continuous first-order stress-velocity Biot system}

The poroelastic problem is based on the reference configuration of the undeformed state described by the domain $\Omega$. Following the assumptions of the theory of linear elasticity, the effective stress $\stress$ is related to the strain tensor 
$\strain:=\frac{1}{2}\left(\nabla \displacement+(\nabla \displacement)^\top\right)$
by the constitutive equation 
$\stress=2 \mu \strain+\lambda \operatorname{div}(\displacement) \bI$
where $\lambda$ and $\mu$ denote the Lam\'e coefficients.
Considering an external force $\bf g$,
the equilibrium equation for the porous medium reads
\begin{align}
\label{eq:equilibrium}
-\operatorname{div} \stress+\alpha \nabla p= \boldsymbol{g},
\end{align}
where $p$ denotes the pore pressure and $\alpha$ is the Biot–Willis constant. In the context of mixed finite element methods, it is crucial to consider the equilibrium equation in divergence form. To this end, we introduce the Cauchy stress tensor 
$\totalstress := \stress - \alpha p \bI$. Since 
$\nabla p = 
\div (p \bI)$, \answer{this will later allow us to seek for $\totalstress \in \hdiv^d$, as \eqref{eq:equilibrium}
can now be written as}
\begin{align}
\label{eq:total}
-\div \totalstress= \boldsymbol{g}.
\end{align}
\answer{With $\mathbb{A}$ denoting the inverse of the fourth-order stiffness tensor associated to Saint Venant-Kirchhoff materials}
\begin{align}
\label{eq:inverse-stiffness}
    \answer{\mathbb{A} \totalstress := 
    \frac{1}{2 \mu}\left(\totalstress-
    \frac{\lambda}{2\mu+d\lambda}(\operatorname{tr} \totalstress) {\bI} \right)},
\end{align}
\answer{the constitutive relation for the total stress can be inverted with respect to the strain}

\begin{align}
    \label{eq:constitutive}
    \answer{\strain 
    = \mathbb{A} \stress 
    = \mathbb{A} \totalstress + \mathbb{A} ( \alpha p \bI)
    = \mathbb{A} \totalstress + \frac{\alpha}{2\mu+d\lambda} p {\bI}.}
\end{align}
On the mass conservation side, under a forced fluid extraction or injection process $f$, the variation of \answer{the displacement-based} fluid content and the percolation velocity of the fluid $\velocity$ are related in divergence form by
\begin{align}
\label{eq:mass}
\frac{\partial}{\partial t}\left(\storage p+\alpha \div \displacement\right)+\div \velocity=f,
\end{align}
where $\storage$ is called storage coefficient.
In order to obtain a symmetric variational formulation, it is of interest to reformulate \eqref{eq:mass} in terms of $\totalstress$.
To this end, \answer{using $\div \displacement = \operatorname{tr}\strain$ together with \eqref{eq:inverse-stiffness}--\eqref{eq:constitutive}, equation~\eqref{eq:mass} with a stress-based fluid content reads}
\begin{align}
\label{eq:mass-stress}
\frac{\partial}{\partial t}\left(\left(\storage+
\frac{d\alpha^2}{2\mu+d\lambda} 
\right)p+
\frac \alpha {2\mu+d\lambda} \operatorname{tr} \totalstress\right)+\div \velocity=f.
\end{align}
Using the \answer{scalar-valued permeability $\kappa$, assuming for simplicity isotropic materials and 
$0 <\underline{\kappa} \leq \kappa(x)\leq   \bar{\kappa}$}, Darcy's law linearly relates the volume flow rate of the fluid to the pressure gradient as follows 
\begin{align}
    \label{eq:Darcy}
\answer{\velocity}=-\kappa \nabla p.
\end{align}

\answer{In order to keep the notation simple, we restrict ourselves to homogeneous boundary conditions, while the subsequently presented analysis extends in a standard way to non-homogeneous boundary data and the numerical examples presented later employ non-trivial boundary conditions (see sections \ref{sec:mandel} and \ref{subsec:footing_3d}).
We consider a decomposition of the boundary $
\partial\Omega=\Gamma_\mathrm{f}\cup\Gamma_\mathrm{p}
=\Gamma_\mathrm{t}\cup\Gamma_\mathrm{d}$ with the index addressing the constrained field (flux, pressure, traction, and displacement). Then we consider the boundary conditions}
\begin{align}
\label{eq:bc}
\answer{\totalstress {\bf n}} \;=\; \boldsymbol 0 \quad \text{on }\Gamma_\mathrm{t}
\qquad
\displacement \;=\; \boldsymbol 0 \quad \text{on }\Gamma_\mathrm{d},
\qquad
p \;=\; 0 \quad \text{on }\Gamma_\mathrm{p},
\qquad
\velocity\cdot  {\bf n} \;=\; 0 \quad \text{on }\Gamma_\mathrm{f},
\end{align}
\answer{allowing for Korn and Poincar\'e constants $C_\mathrm{K}$ and $C_\mathrm{P}$. Finally, we close the model problem with conforming initial data $\totalstress_0, p_0 \in L^2(\Omega)$  for the total stress and fluid pressure, defining the fluid content,}
\begin{align*}
\answer{\totalstress \;=\; \totalstress_0 \quad \text{on }\Omega,}
\qquad
\answer{p \;=\; p_0 \quad \text{on }\Omega.}
\end{align*}
\subsection{\answer{Semi-discrete first-order stress-velocity Biot system}}
In order to focus on the derivation of the mixed formulation of the first‑order stress–velocity Biot system and the spatial discretization in the next sections \ref{sec:porousmediamixed} and \ref{sec:fem}, we employ the commonly used implicit Euler scheme, although any higher‑order implicit method could be used without affecting the developments that follow. To this end, let $0 = t_0 \leq t_1 \leq ... \leq t_N = T$ denote an equidistant partition of the time interval with time step size $\Delta t = t_n - t_{n-1}$, $n=1,...,N\in \mathbb{N}$. \answer{With a focus onto solving a single discrete time step, we introduce the rescaled Darcy velocity $\integratedvelocity := \Delta t\,\velocity$}. \answer{Moreover, time-discrete quantities are denoted by superscripts when convenient, e.g., $(\totalstress^{n-1}, p^{n-1})$ for data at the previous time level. For brevity, superscripts, associated with the current time level, are omitted throughout the manuscript.} 
\answer{Focusing on the semi-discrete strong form for now, we consider the canonical function spaces with suggested regularity motivated by the governing equations.} \answer{Given $(\totalstress^{n-1},p^{n-1})\in \hdiv^d \times H^1(\Omega)$ from the previous time level, starting from initial conditions $\totalstress^{0}=\totalstress_0$ and $p^0=p_0$, the time-discrete Biot system at time step $n$ consists of finding $(\totalstress,p,\displacement,\integratedvelocity)\in 
\hdiv^d \times H^1(\Omega)  \times H^1(\Omega)^d \times \hdiv$
such that}
\begin{subequations}\label{eq:strong-form-sigma} \begin{alignat}{2} -\div \totalstress &= \mathbf g &&\quad \text{in }\Omega, \label{eq:strong-form-sigma-momentum} \\[0.9em] \mathbb{A} \totalstress + \tilde \alpha p \,\bI - \strain &= \boldsymbol{0} &&\quad \text{in }\Omega, \label{eq:strong-form-sigma-venant} \\[0.9em] \tilde \alpha\,\tr(\totalstress) + \tilde c_0\, p + \div \integratedvelocity &= \tilde{f} &&\quad \text{in }\Omega, \label{eq:strong-form-sigma-mass} \\[0.9em] \nabla p + \Delta t^{-1} \kappa^{-1}\,\integratedvelocity &= 0 &&\quad \text{in }\Omega, \label{eq:strong-form-sigma-darcy} \end{alignat} \end{subequations}
\answer{accompanied by boundary conditions of the same form as~\eqref{eq:bc}}
\begin{align}
\label{eq:bc-discrete}
\answer{\totalstress {\bf n}} \;=\; \boldsymbol 0 \quad \text{on }\Gamma_\mathrm{t},
\qquad
\displacement \;=\; \boldsymbol 0 \quad \text{on }\Gamma_\mathrm{d},
\qquad
p \;=\; 0 \quad \text{on }\Gamma_\mathrm{p},
\qquad
\integratedvelocity\cdot  {\bf n} \;=\; 0 \quad \text{on }\Gamma_\mathrm{f}.
\end{align}
\answer{To both simplify the notation as well as better highlight the symmetric poromechanical coupling, we have used the abbreviations}
\[
\tilde \alpha := \frac{\alpha}{2\mu+d\lambda},
\qquad 
\tilde c_0 := \storage + \frac{d\alpha^2}{2\mu+d\lambda},
\qquad
\answer{\integratedvelocity := \Delta t\,\velocity,}
\qquad
\tilde{f} := \Delta t f  
+ \tilde c_0\, p^{n-1}
+ \tilde \alpha\,\tr(\totalstress^{n-1}).
\]

\section{\answer{A fully-mixed formulation with weak symmetry}}
\label{sec:porousmediamixed}

\answer{Based on the strong form presented in the previous section, we focus on the weak formulation of the fully-mixed formulation of the Biot equations. For this, we choose to enforce symmetry of the total stress in a weak sense, as also presented in~\cite{lee2016robust,baerland2017weakly}.}

\subsection{\answer{Weak formulation with weak symmetry}}

It is well known that a variational formulation based on the Hellinger-Reissner principle can be derived from the constitutive equation \eqref{eq:constitutive}
via the Lagrange multiplier method by eliminating the constraints of equilibrium equations. However, the presence of symmetric gradient $\strain$ requires a symmetric test space for the integration by parts when testing this equation with a test stress $\teststress \in \hdiv^d$. To avoid later on the construction of a symmetric space, we consider the additional variable $\boldsymbol{\xi} = \frac{1}{2} \nabla \times \displacement$, which is a scalar for $d=2$ and a vector for $d=3$. 
It holds
\[
\strain = \nabla \mathbf u - \boldsymbol{\xi}\cdot\chi ,
\]
where the matrix $\boldsymbol{\xi}\cdot\chi \in \mathbb{R}^{d\times d}$ is defined componentwise by
\[
(\boldsymbol{\xi}\cdot\chi)_{ij}
=
\varepsilon_{ij}\,\xi
\text{ if } d=2,
\text{ and }
\displaystyle \sum_{k=1}^3 \varepsilon_{ijk}\,\xi_k
\text{ if } d=3,
\qquad i,j=1,\dots,d
\]
with the Levi--Civita symbol
$\varepsilon$.
Starting from~\eqref{eq:strong-form-sigma-venant}, the reformulated constitutive equation
$    \mathbb{A} \totalstress = 
     \nabla \mathbf{u}-\boldsymbol{\xi} \cdot \chi
 - \tilde \alpha p \bI
$
can now be tested with a non-symmetric test stress $\teststress \in \hdiv^d$ to obtain
\begin{align}
    \label{eq:constitutiveweak}
    (\mathbb{A} \totalstress,\teststress)
    + {\tilde \alpha } ( p,\tr\teststress)
     +( \displacement,\div \teststress)
     + (\boldsymbol{\xi} \cdot \chi,\teststress)
 =0,
\end{align}
where we have integrated by parts the second term. 
In addition, testing the mass conservation equation in~\eqref{eq:strong-form-sigma-mass} with $q \in L^2(\Omega)$ leads to
\begin{align}
\label{eq:massweak}
\tilde c_0 (p,q)
+\tilde \alpha
\left(
\operatorname{tr} \totalstress,q
\right)
+ (\div \integratedvelocity,q) = \langle \answer{\tilde f},q\rangle 
\end{align}
for all $q \in {{ L}}^2(\Omega)$. \answer{We highlight the symmetric structure of the stress-pressure coupling terms. The resulting block-symmetric bilinear forms correspond to a minimization problem for the energy $(\totalstress,p) \mapsto \frac{1}{2} (\mathbb{A}\totalstress,\totalstress) + \tilde \alpha (p, \operatorname{tr} \totalstress) + \frac{\tilde c_0}{2}(p,p)$. The additional contributions enter through external data and dissipation associated to fluid fluxes. Lagrange multipliers enter as the minimization is subject to conservation of linear momentum, symmetry of the total stress tensor and mass conservation; we refer to~\cite{both2019gradient} for a detailed discussion of the underlying gradient flow perspective on poromechanics, the corresponding minimization principles, and the roles of primal and dual variables. Indeed, the displacement $\displacement \in L^2(\Omega)^d$ enters as Lagrange multiplier and} corresponds to the weak form of \eqref{eq:equilibrium} as it holds
\begin{align}
    \label{eq:Equilibriumweak}
-(\div \totalstress,\testdisplacement)=({\bf g},\testdisplacement)
\end{align}
for all $\testdisplacement \in L^2(\Omega)^d$.
The Lagrange parameter $\vorticity \in L^{d(d-1)/2}(\Omega)$ corresponds to the symmetry constraint
$\operatorname{as}\totalstress = 0$, which in weak form reads 
\begin{align}
    \label{eq:symmetryweak}
(\totalstress,\testvorticity \cdot \chi ) = 0\end{align} for all $\testvorticity \in L^{d(d-1)/2}(\Omega)$.
The weak imposition of equilibrium and symmetry is consistent,
since divergence and skew–symmetric components of stresses in $\hdiv^d$
can be prescribed independently. More precisely, for any
$(\testdisplacement,\testvorticity)
\in L^2(\Omega)^d \times L^2(\Omega)^{d(d-1)/2}$
there exists $\teststress\in \hdiv^d$ such that
\begin{align}
\label{eq:infsupelasticity}
\div\teststress = \testdisplacement,
\qquad
(\teststress,\testvorticity\cdot\chi)
\gtrsim
\|\testvorticity\|_0^2,
\text{ and }
\|\teststress\|_{\answer{H(\div)}}
\lesssim
\|\testdisplacement\|_0
+
\|\testvorticity\|_0.
\end{align}
\answer{In the above mentioned minimization perspective, the scaled Darcy velocity $\integratedvelocity \in \hdiv$ enters as Lagrange multiplier originating from a mass conservation constraint, yet it also is associated with quadratic dissipation in form of the weak formulation of Darcy's law. Testing~\eqref{eq:strong-form-sigma-darcy} with a test function $\testvelocity \in \hdiv$ yields}
\begin{align}
    \label{eq:Darcyweak}
\answer{\frac{1}{\Delta t}(\kappa^{-1}\integratedvelocity,\testvelocity) - ( p,\div\testvelocity) =0.}
\end{align}

\answer{The weak formulation of~\eqref{eq:strong-form-sigma} with integrated boundary conditions~\eqref{eq:bc-discrete} lead to the following canonical functions spaces for the different primary variables}
\begin{align*}
\bf{X} &:= \Stress \times \Pressure \times \Displacement \times \Vorticity \times \Velocity ,\\
\Stress &:= \{\teststress\in \answer{\hdiv^d}:\ \teststress {\bf n}=\boldsymbol 0\text{ on }\Gamma_\mathrm{t}\},\\
\Pressure &:= L^2(\Omega),\\
\Displacement &:= L^2(\Omega)^d,\\
\Vorticity &:= L^2(\Omega)^{d(d-1)/2},\\
\Velocity &:= \{\testvelocity\in \answer{\hdiv}:\ \testvelocity\cdot {\bf n}=0\text{ on }\Gamma_\mathrm{f}\}.
\end{align*}
We can now combine the equations \eqref{eq:constitutiveweak},\eqref{eq:Darcyweak},\eqref{eq:Equilibriumweak},\eqref{eq:symmetryweak} and \eqref{eq:massweak} to obtain the final \answer{semi-discrete} variational formulation: seek
$(\totalstress,p,
\displacement,
\vorticity,
\integratedvelocity
 ) \in {\bf X}$ such that
\begin{equation}
\label{eq:weakformulation}
\begin{aligned}
(\mathbb{A} \totalstress,\teststress)
 &+ {\tilde \alpha } ( p,\tr\teststress)
  &+( \mathbf{u},\div \teststress)
&+ (\boldsymbol{\xi} \cdot \chi,\teststress)
 &&=0
 \\
 \tilde \alpha
\left(
\operatorname{tr} \totalstress,q
\right)&+
\tilde c_0\left(p,q\right) &&&
+(\div \integratedvelocity,q)&=\langle \tilde{f},q\rangle 
\\
(\div \totalstress,\testdisplacement)&&&&&=-({\bf g},\testdisplacement)
\\
(\totalstress,\testvorticity\cdot \chi ) 
&&&&& = 0 
\\
 &
\ \ \ ( p,\div\testvelocity)
&&&-\frac{1}{\Delta t}(\kappa^{-1}\integratedvelocity,\testvelocity)
&=0  
\end{aligned}
\end{equation}
for all 
$(\teststress,q,
\testdisplacement,
\testvorticity,
\testvelocity
 ) \in {\bf X}$. 

\subsection{\answer{General saddle-point view}}

\answer{To facilitate a unified analysis of stability, we recast the weak formulation~\eqref{eq:weakformulation} as an abstract saddle-point problem, allowing the use of general inf--sup theory.} In fact, the system~\eqref{eq:weakformulation} can be cast into the following saddle-point problem: seek \answer{$(\tupleu,\tuplep) \in \tupleV \times \tupleQ$} such that
\begin{equation}
\label{eq:genericsaddlepoint}
\begin{aligned}
\mathcal A(\tupleu, \tuplev)+
\mathcal B(\tuplev, \tuplep) & =
\mathcal F(\tuplev)
\\
\mathcal B(\tupleu, \tupleq)  - 
\mathcal C(\tuplep, \tupleq)& =
\mathcal G( \tupleq)
\end{aligned}
\end{equation}
holds for all $ (\tuplev,\tupleq) \in 
\tupleV \times \tupleQ$.
\answer{For this, we separate the generalized stresses and Lagrange-type parameters in multi-vector variables}
\begin{alignat*}{2}
\tupleV &:= \Stress \times \Pressure, &\qquad 
\tupleQ &:= \Displacement \times \Vorticity \times \Velocity,\\
\tupleu &:=(\totalstress, p), &\qquad 
\tuplep &:= (\displacement, \vorticity, \integratedvelocity), \\
\tuplev &:=(\teststress,q), &\qquad 
\tupleq &:= (\testdisplacement,\testvorticity,\testvelocity)
\end{alignat*}
\answer{with the canonical norms}
\begin{align*}
\answer{\|\tuplev \|_{\tupleV}}^2:= 
\answer{\| (\teststress,q) \|^2_{\tupleV}
=
\| \teststress \|^2_{{H(\div)}}
+
\| q \|^2_{0}
\text{ and }
}
\answer{
\|\tupleq \|^2_{\tupleQ}
}:= 
\answer{
\|(\testdisplacement,\testvorticity,\testvelocity) \|^2_{\tupleQ}
=
\|\testdisplacement \|^2_{0}
+
\|\testvorticity \|^2_{0}
+
\|\testvelocity \|^2_{{H(\div)}}
}.
\end{align*}

\answer{Consequently, the bilinear and linear forms collect the respective terms in the following way}
\begin{equation}
\begin{alignedat}{1}
\mathcal A(\tupleu,\tuplev)
&:= (\mathbb{A}\totalstress,\teststress)
   + \tilde c_0 (p, q)
   + \tilde\alpha (p,\tr\teststress)
   + \tilde\alpha (q,\tr\totalstress),\\
\mathcal B(\tuplev,\tuplep)
&:= (q,\div\integratedvelocity)
   + (\displacement,\div\teststress)
   + (\boldsymbol{\xi}\cdot\chi,\teststress),\\
\mathcal C (\tuplep,\tupleq)
&:= \frac{1}{\Delta t}(\kappa^{-1}\integratedvelocity,\testvelocity),\\
\mathcal F(\tuplev) &:= \langle \tilde{f},q\rangle,
\\
\mathcal G(\tupleq) &:= -({\bf g},\testdisplacement).
\end{alignedat}
\end{equation}
\answer{The bilinear forms $\mathcal A$ and $\mathcal C$ are continuous on $\tupleV\times\tupleV$ and $\tupleQ\times\tupleQ$, respectively, with continuity constants $(2\mu)^{-1}+\widetilde c_0+2\sqrt d\,\widetilde\alpha$ and $(\Delta t\,\underline\kappa)^{-1}$.}
The bilinear form $\mathcal B$ encodes the constraint structure of the mixed formulation. Its associated kernel spaces
\begin{align}
H &:= 
\{\tuplep\in\tupleQ :
\mathcal B(\tuplev,\tuplep)=0
\ \forall \tuplev\in\tupleV\}
=
\{(\bf{0}, \bf{0}, \integratedvelocity)\in\tupleQ :
\div\integratedvelocity=0\}\\
\text{ and }
K &:= 
\{\tuplev\in\tupleV :
\mathcal B(\tuplev,\tuplep)=0
\ \forall \tuplep\in\tupleQ\}
=\{(\teststress,0)\in\tupleV :
\div\teststress=0,\ 
\operatorname{as}(\teststress)=0\}
\end{align}
identify precisely those variables that remain uncoupled by $\mathcal B$ and therefore govern the stability of the saddle–point problem \eqref{eq:genericsaddlepoint}. On the kernel spaces, the energy forms decouple: on $K$, the form $\mathcal A$ acts only on the symmetric, divergence-free stress and is coercive with constant \answer{$(2\mu\,C_\mathrm{K}^2)^{-1}$}, whereas on $H$, the Darcy dissipation $\mathcal C$ acts only on the divergence-free velocity and is coercive with constant \answer{$(\Delta t \overline\kappa)^{-1}$}.  

It remains to establish the inf–sup stability of $\mathcal B$
on the orthogonal complement
\[
H^\perp
=
\Big\{
\tupleq = (\displacement,\vorticity,\integratedvelocity)\in\tupleQ :
(\integratedvelocity,\mathbf z)=0
\ \forall \mathbf z\in\hdiv \text{ s.t. } \div\mathbf z=0
\Big\}.
\]
Since the divergence operator on $\hdiv$ has closed range on bounded Lipschitz domains, its restriction to the $L^2$–orthogonal complement of the divergence–free subspace is injective with bounded inverse. Consequently, on $H^\perp$ the velocity component satisfies
\[
\|\integratedvelocity\|_{H(\div)}
\lesssim
\|\div\integratedvelocity\|_0.
\]
Together with the trivial inequality
\(
\|\div\integratedvelocity\|_0 \le \|\integratedvelocity\|_{H(\div)},
\)
this yields the norm equivalence
\[
\|\integratedvelocity\|_{H(\div)}
\simeq
\|\div\integratedvelocity\|_0
\quad\text{on } H^\perp.
\]
Let $\tupleq=(\displacement,\vorticity,\integratedvelocity)\in H^\perp$.
Choose $q_*=\div\integratedvelocity$
and select $\teststress_*\in\hdiv^d$ according to
\eqref{eq:infsupelasticity}
such that
\(
\div\teststress_*=\displacement.
\)
Then
\[
\mathcal B((\teststress_*,q_*),\tupleq)
=
(q_*,\div\integratedvelocity)
+
(\displacement,\div\teststress_*)
+
(\vorticity\cdot\chi,\teststress_*)
\gtrsim
\|\div\integratedvelocity\|_0^2
+
\|\displacement\|_0^2
+
\|\vorticity\|_0^2 .
\]
Moreover,
\[
\|(\teststress_*,q_*)\|_{\tupleV}
\lesssim
\|\displacement\|_0
+
\|\vorticity\|_0
+
\|\div\integratedvelocity\|_0 .
\]
Using the norm equivalence on $H^\perp$, we conclude that
\[
\inf_{0\neq \tupleq\in H^\perp}
\sup_{0\neq \tuplev\in\tupleV}
\frac{\mathcal B(\tuplev,\tupleq)}
{\|\tuplev\|_{\tupleV}\,\|\tupleq\|_{\tupleQ}}
\ge \beta
\]
for some constant $\beta>0$ independent of the data.
We are therefore in the setting of a perturbed saddle–point problem
with coercivity of $\mathcal A$ on $K$,
coercivity of $\mathcal C$ on $H$,
and inf–sup stability of $\mathcal B$ on $H^\perp$.
This yields the following well–posedness result.
\begin{theorem}
\label{thm:wellposedness}
Let $\Omega$ be a bounded, connected Lipschitz domain.
Assume that the Lamé parameters $\mu,\lambda$,
the storage coefficient $\tilde c_0$,
and the Biot parameter $\tilde\alpha$ are strictly positive,
and that the permeability tensor $\boldsymbol\kappa$
is symmetric and uniformly elliptic.
Given $\tilde{f}\in L^2(\Omega)$ and ${\bf g}\in L^2(\Omega)^d$,
 the saddle–point problem
\eqref{eq:genericsaddlepoint}
admits a unique solution
$(\tupleu,\tuplep)\in\tupleV\times\tupleQ$.
Moreover, there exists a constant $C>0$ such that the stability estimate
\[
\|\tupleu\|_{\tupleV}
+
\|\tuplep\|_{\tupleQ}
\le
C\Big(
\|\mathcal F\|_{\tupleV'}
+
\|\mathcal G\|_{\tupleQ'}
\Big)
\]
holds.
\end{theorem}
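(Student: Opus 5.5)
The plan is to apply the abstract theory for perturbed saddle-point problems of the form \eqref{eq:genericsaddlepoint} with symmetric, positive semidefinite $\mathcal A$ and $\mathcal C$. In the version of Boffi--Brezzi--Fortin (or Braess), well-posedness follows from four ingredients: boundedness of $\mathcal A,\mathcal B,\mathcal C$; coercivity of $\mathcal A$ on $K$; coercivity of $\mathcal C$ on $H$; and inf--sup stability of $\mathcal B$ on $\tupleV\times H^\perp$. Continuity of $\mathcal A$ and $\mathcal C$ was noted above, and continuity of $\mathcal B$ follows from Cauchy--Schwarz. The inf--sup bound on $H^\perp$ has been established just before the theorem using \eqref{eq:infsupelasticity} and the closed range of $\div$ on $\Velocity$.

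First, I will check that $\mathcal A$ and $\mathcal C$ are positive semidefinite on the whole spaces. For $\mathcal C$ this is immediate. For $\mathcal A$, I split $\teststress$ into its deviatoric part and its trace part. This gives
\[
\mathcal A(\tuplev,\tuplev)=\tfrac1{2\mu}\|\operatorname{dev}\teststress\|_0^2+\tfrac{1}{d(2\mu+d\lambda)}\|\tr\teststress\|_0^2+2\tilde\alpha(q,\tr\teststress)+\tilde c_0\|q\|_0^2 .
\]
Since $\tilde c_0=\storage+d\alpha^2/(2\mu+d\lambda)$ and $\tilde\alpha^2 d(2\mu+d\lambda)=d\alpha^2/(2\mu+d\lambda)$, Young's inequality shows that the cross term is absorbed. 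What remains is $\frac1{2\mu}\|\operatorname{dev}\teststress\|_0^2+\storage\|q\|_0^2\ge0$, which is nonnegative even when $\storage=0$ under the stated positivity assumptions. This semidefiniteness is what allows the mixed Schur-type argument to work.

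Second, I will verify coercivity on $K$. For $(\teststress,0)\in K$ we have $q=0$, so $\mathcal A$ reduces to $(\mathbb A\teststress,\teststress)$, which controls $\|\operatorname{dev}\teststress\|_0^2$ but degenerates in the trace as $\lambda\to\infty$. Here I use the standard estimate $\|\teststress\|_0\lesssim\|\operatorname{dev}\teststress\|_0+\|\div\teststress\|_0$ for symmetric $\teststress$, valid when $\Gamma_\mathrm{t}\neq\emptyset$; in the pure traction case it holds modulo constants after normalization. Since $\div\teststress=0$ on $K$, we have $\|\teststress\|_{H(\div)}=\|\teststress\|_0$, and coercivity in the $\tupleV$-norm follows. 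If $\lambda$-robustness is not required, the cruder bound $(\mathbb A\teststress,\teststress)\ge(2\mu+d\lambda)^{-1}\|\teststress\|_0^2$ suffices. Coercivity of $\mathcal C$ on $H$ is immediate, because $\div\integratedvelocity=0$ gives $\mathcal C(\tupleq,\tupleq)\ge(\Delta t\bar\kappa)^{-1}\|\tupleq\|_{\tupleQ}^2$.

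Third, I will invoke the abstract theorem: under these hypotheses the operator of \eqref{eq:genericsaddlepoint} is an isomorphism from $\tupleV\times\tupleQ$ onto its dual. This gives existence, uniqueness and the bound with $C$ depending on $\beta$, the coercivity constants and the continuity constants. If a self-contained route is preferred, I would decompose $\tupleq=\tupleq_H+\tupleq_{H^\perp}$, use the inf--sup condition to control $\tupleq_{H^\perp}$ through $\mathcal F-\mathcal A(\tupleu,\cdot)$, and test with $(\tupleu,-\tuplep)$ to control the semidefinite energies. Then I would use coercivity on $K$ for the kernel component of $\tupleu$ and coercivity on $H$ for $\tupleq_H$.

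The main obstacle is the interaction between the semidefinite, non-block-diagonal $\mathcal A$ (because of the coupling term $\tilde\alpha$) and coercivity only on $K$. The abstract theorem in its general form requires either coercivity of $\mathcal A$ on $\ker\mathcal B$ together with semidefiniteness, or an ellipticity of $\mathcal A$ on a larger space. I will handle this by relying on the semidefiniteness computation from the first step, which provides the Cauchy--Schwarz inequality $\mathcal A(\tupleu,\tuplev)\le\mathcal A(\tupleu,\tupleu)^{1/2}\mathcal A(\tuplev,\tuplev)^{1/2}$ needed in the energy argument.
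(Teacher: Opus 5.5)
Your proposal is correct and follows the paper's route. The paper also invokes the perturbed saddle-point theorem of Boffi--Brezzi--Fortin, using coercivity of $\mathcal A$ on $K$, coercivity of $\mathcal C$ on $H$, and the inf--sup condition for $\mathcal B$ on $H^\perp$ established just before the statement. Your explicit check that $\mathcal A$ is positive semidefinite supplies a hypothesis of that theorem which the paper leaves implicit: Young's inequality absorbs the $\tilde\alpha$ cross term and leaves $\frac{1}{2\mu}\|\operatorname{dev}\teststress\|_0^2+\storage\|q\|_0^2\ge 0$. One small correction: normalization of $\tr\teststress$ is needed when $\Gamma_\mathrm{t}=\emptyset$, which is the pure displacement case, not pure traction. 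In that case $c\,\underline{\bf I}\in K$, and it is your $\lambda$-dependent bound $(2\mu+d\lambda)^{-1}$ that closes the argument.
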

\begin{proof}{\answer{
This is Theorem 4.3.1. in \cite{BoffiBrezziFortin:2013}. With the constants computed above we obtain $C=\min(C_f,C_g)$ with}}
\answer{
\begin{align*}
C_f &\le \frac{a_{f,1}}{\Delta t} + \frac{a_{f,1/2}}{\sqrt{\Delta t}} + a_{f,0},
\qquad
C_g \le \frac{a_{g,1/2}}{\sqrt{\Delta t}} + a_{g,0}
       + \frac{\sqrt{2\,\overline\kappa\,\|\mathcal A\|}}{\beta}\sqrt{\Delta t}
       + 2\,\overline\kappa\,\Delta t,
\end{align*}
where all coefficients are independent of $\Delta t$ and $\lambda$:
\begin{align*}
a_{f,1}   &= \frac{1+2C_K\sqrt{2\mu\,\|\mathcal A\|}+4\mu C_K^2\,\|\mathcal A\|}{\underline\kappa\,\beta^2},
&
a_{f,1/2} &= \frac{2C_K\sqrt{\mu}}{\sqrt{\underline\kappa}\,\beta}
           + \frac{\big(\sqrt{\underline\kappa}+\sqrt{\overline\kappa}\big)\sqrt{\|\mathcal A\|}}
                  {\underline\kappa\,\beta^2}
             \Big(1+3C_K\sqrt{2\mu\,\|\mathcal A\|}\Big),
\\
a_{f,0}   &= 4\mu C_K^2
           + \frac{\sqrt{\underline\kappa}+\sqrt{\overline\kappa}}{\sqrt{\underline\kappa}\,\beta}
             \Big(1+3C_K\sqrt{2\mu\,\|\mathcal A\|}\Big),
&
a_{g,1/2} &= \frac{\big(\sqrt{\underline\kappa}+3\sqrt{\overline\kappa}\big)\sqrt{\|\mathcal A\|}}
                  {\underline\kappa\,\beta^2}
             \Big(1+C_K\sqrt{2\mu\,\|\mathcal A\|}\Big),
\\
a_{g,0}   &= \frac{\sqrt{\underline\kappa}+3\sqrt{\overline\kappa}}{\sqrt{\underline\kappa}\,\beta}
             \Big(1+C_K\sqrt{2\mu\,\|\mathcal A\|}\Big)
           + \frac{\|\mathcal A\|\big(\underline\kappa
             +2\sqrt{\underline\kappa\,\overline\kappa}
             +2\,\overline\kappa\big)}{\underline\kappa\,\beta^2}.
\end{align*}
Here we abbreviated
$\|\mathcal A\| = (2\mu)^{-1}+\widetilde c_0+2\sqrt d\,\widetilde\alpha$ for brevity.
}
\end{proof}

\section{Finite element formulation}
\label{sec:fem}
We now consider a shape-regular finite element triangulation 
$\tri$ of $\Omega$. 
The mesh induces the set of element faces (or edges in $d=2$)
$\mathcal E$, which is assumed to respect the boundary decomposition.
For each element $T\in\tri$ we denote by $h_T:=\operatorname{diam}(T)$
its diameter, and define the global mesh size by
\[
h := \max_{T\in\tri} h_T .
\]
A conforming Galerkin discretization of order $k\geq 0$ of the mixed formulation
\eqref{eq:genericsaddlepoint} consists in selecting finite element
subspaces
\[
\answer{\bf{X}_\tri := \tupleVT \times \tupleQT\subset \bf{X},\qquad 
\tupleVT := \StressT \times \PressureT \subset \tupleV = \Stress \times \Pressure,
\qquad
\tupleQT := \DisplacementT \times \VorticityT \times \VelocityT \subset \tupleQ = \Displacement \times \Vorticity \times \Velocity,}
\]
\answer{exemplified in the next subsection.} Altogether, by employing the Galerkin method to~\eqref{eq:weakformulation}, the fully-discrete solution for a single time step reads \answer{(again dropping superscripts for the current time step for brevity)}: for given data $(\totalstressT^{n-1},\pressureT^{n-1})\in \StressT \times \PressureT$ at the previous time step, seek $\answer{(\totalstressT,\pressureT,\displacementT,\vorticityT,\integratedvelocityT) \in {\bf X}_\tri}$ such that for all 
$(\teststressT,\testpressureT,
\testdisplacementT,
\testvorticityT,
\testvelocityT
 ) \in {\bf X}_\tri$ it holds
\begin{equation}
\label{eq:fully-discrete}
\begin{aligned}
\answer{(\mathbb{A} \totalstressT,\teststressT)}
 &+ \tilde \alpha ( \pressureT,\tr \teststressT)
  &+( \displacementT,\div \teststressT)
&+ (\vorticityT \cdot \chi,\teststressT)
 &&=0
 \\
 \tilde \alpha
\left(
\operatorname{tr} \totalstressT,\testpressureT
\right)&+
\answer{\tilde{c}_0 \left(\pressureT, \testpressureT\right)} &&&
+(\div \integratedvelocityT,\testpressureT)&= \langle \tilde f, \testpressureT \rangle
\\
(\div \totalstressT,\testdisplacementT)&&&&&=-({\bf g},\testdisplacementT)
\\
(\totalstressT,\testvorticityT\cdot \chi ) 
&&&&& = 0 
\\
 &
\ \ \ ( \pressureT,\div\testvelocityT)
&&&\answer{-\frac{1}{\Delta t}(\kappa^{-1}\integratedvelocityT,\testvelocityT)}
&=0.
\end{aligned}
\end{equation}
Similarly, in compact notation, as introduced in section~\ref{sec:porousmediamixed}, the discrete solutions $\tupleuT \in \tupleVT$ and $\tuplepT \in \tupleQT$ satisfy
\begin{equation}
\label{eq:discretegenericsaddlepoint}
\begin{aligned}
\mathcal A(\tupleuT, \tuplevT) +
\mathcal B(\tuplevT, \tuplepT) & =
\mathcal F( \tuplevT)
\\
\mathcal B(\tupleuT, \tupleqT)  - 
\mathcal C(\tuplepT, \tupleqT)& =
\mathcal G( \tupleqT)
\end{aligned}
\end{equation}
for all $(\tuplevT, \tupleqT) \in \tupleVT \times \tupleQT$.

The bilinear form $\mathcal B$ induces the discrete kernel spaces
\begin{align}
H_\tri 
&
:=
\{\tupleqT\in\tupleQT :
\mathcal B(\tuplevT,\tupleqT)=0
\ \forall \tuplevT\in\tupleVT\},
\\
K_\tri  
&
:=
\{\tuplevT\in\tupleVT :
\mathcal B(\tuplevT,\tupleqT)=0
\ \forall \tupleqT\in\tupleQT\}.
\end{align}
In order to mimic the structural properties of the \answer{semi-discrete} formulation,
we assume that the discrete spaces are chosen such that the kernel
structure of the bilinear form $\mathcal B$ is preserved at the discrete
level. 

\begin{lemma}
\label{lem:discrete_kernels_E1B}
Assume the discrete spaces satisfy 
$
\div(\VelocityT)\subseteq \PressureT 
$
as well as
\begin{itemize}
\item[(A)] There exists $\beta_u>0$, independent of the mesh-size, such that
\begin{equation}\label{eq:E1_disc_div_infsup}
\inf_{0\neq \displacementT\in \DisplacementT}
\sup_{0\neq \teststressT\in \StressT}
\frac{(\displacementT,\div\teststressT)}
{\|\teststressT\|_{H(\div)}\,\|\displacementT\|_0}
\ge \beta_u .
\end{equation}

\item[(B)]
There exists a finite-dimensional space $\PsiT \subset H^1_0(\Omega)^d$
such that
\[
\curl \PsiT \subseteq \StressT ,
\]
and there exists $\beta_\xi>0$, independent of $h$, such that
\begin{equation}\label{eq:B_disc_stokes_infsup}
\inf_{0\neq \vorticityT\in \VorticityT}
\sup_{0\neq \psi_\tri\in \PsiT}
\frac{(\div\psi_\tri,\vorticityT)}
{\|\psi_\tri\|_{1}\,\|\vorticityT\|_0}
\ge \beta_\xi .
\end{equation}
\end{itemize}
Then:

\begin{subequations}
\begin{align}
H_\tri
&=
\{(\bf{0},\bf{0},\integratedvelocityT)\in\tupleQT:\ 
(\testpressureT,\div\integratedvelocityT)=0\ \forall \testpressureT\in\PressureT\},
\label{eq:HT_explicit}
\\ 
K_\tri
&=
\Big\{
(\teststressT,\testpressureT)\in\tupleVT :
(\testpressureT,\div\testvelocityT)=0 \ \forall \testvelocity\in\VelocityT,
(\displacementT,\div\teststressT)=0 \ \forall \displacementT\in\DisplacementT,
(\testvorticityT\cdot\chi,\teststressT)=0\  \forall \testvorticityT\in\VorticityT
\Big\}. \label{eq:KT_explicit}
\end{align}
\end{subequations}
\end{lemma}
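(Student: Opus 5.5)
The plan is to compute both kernels directly from the definition of $\mathcal B$, testing against one component at a time. Recall
\[
\mathcal B(\tuplevT,\tupleqT)=(\testpressureT,\div\testvelocityT)+(\testdisplacementT,\div\teststressT)+(\testvorticityT\cdot\chi,\teststressT).
\]
The expression for $K_\tri$ in \eqref{eq:KT_explicit} should be immediate. Since $\tupleQT=\DisplacementT\times\VorticityT\times\VelocityT$ is a product space, we may test with tuples $\tupleqT$ having only one nonzero component. Then $\mathcal B(\tuplevT,\tupleqT)=0$ for all $\tupleqT$ holds if and only if each of the three terms vanishes separately for all admissible test functions. This is exactly the set of three conditions listed in \eqref{eq:KT_explicit}. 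Nothing beyond linearity is needed here.

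For $H_\tri$, let $\tupleqT=(\displacementT,\vorticityT,\integratedvelocityT)\in H_\tri$. I would argue in three steps.

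\textbf{Step 1 (velocity).} Take $\teststressT=0$ and let $\testpressureT$ range over $\PressureT$. This gives $(\testpressureT,\div\integratedvelocityT)=0$ for all $\testpressureT\in\PressureT$. Using $\div\VelocityT\subseteq\PressureT$, the choice $\testpressureT=\div\integratedvelocityT$ shows this is equivalent to $\div\integratedvelocityT=0$, which matches the paper's description.

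\textbf{Step 2 (displacement).} Take $\testpressureT=0$. Then $(\displacementT,\div\teststressT)+(\vorticityT\cdot\chi,\teststressT)=0$ for all $\teststressT\in\StressT$. Now choose $\teststressT=\curl\psi_\tri$ with $\psi_\tri\in\PsiT$ (row-wise curl), which lies in $\StressT$ by (B). Its divergence vanishes, so the displacement term drops out and we obtain $(\vorticityT\cdot\chi,\curl\psi_\tri)=0$ for all $\psi_\tri\in\PsiT$.

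\textbf{Step 3 (vorticity).} The key algebraic identity to verify is
\[
(\vorticityT\cdot\chi,\curl\psi)=c\,(\vorticityT,\div\psi)
\]
with a nonzero dimensional constant $c$ (up to sign and the matching convention $\div\psi$ versus $\operatorname{as}\curl\psi$). In 2D, the skew part of the row-wise curl of $\psi=(\psi_1,\psi_2)$ is $\pm\tfrac12\div\psi$. In 3D, $\operatorname{as}(\curl\psi)$ is, up to a constant, $\nabla\cdot\psi$-type, namely $\tfrac12(\operatorname{tr}\nabla\psi\,\bI-\nabla\psi^\top)$ contracted with the Levi-Civita symbol. This is a pointwise computation, and the boundary condition $\psi\in H^1_0$ is not needed for it. With the identity in hand, \eqref{eq:B_disc_stokes_infsup} yields
\[
\beta_\xi\|\vorticityT\|_0\le\sup_{\psi_\tri}\frac{|(\div\psi_\tri,\vorticityT)|}{\|\psi_\tri\|_1}=0,
\]
so $\vorticityT=0$. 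Returning to Step 2, we then have $(\displacementT,\div\teststressT)=0$ for all $\teststressT\in\StressT$, and (A) gives $\displacementT=0$. The reverse inclusion is trivial, which establishes \eqref{eq:HT_explicit}.

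The main obstacle is Step 3: pinning down the identity relating $\operatorname{as}(\curl\psi)$ to $\div\psi$ in both $d=2$ and $d=3$ with the paper's $\chi$ convention. In 3D the correct statement may involve $\operatorname{as}\curl\psi=\Xi\psi$-type operators, i.e.\ $\div$ of a modified field such as $\psi^\top-\operatorname{tr}\psi\,\bI$. If so, I would apply (B) to the corresponding transformed $\psi_\tri$, or interpret (B) as stated for that combination, as in the Arnold--Falk--Winther framework. I would first check the 2D case explicitly and then use the standard 3D identity $\operatorname{skw}\curl\psi\leftrightarrow\div(\ldots)$ from the weak-symmetry literature.
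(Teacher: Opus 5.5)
Your proposal is correct and follows the paper's proof step for step. It uses the pressure test together with $\div(\VelocityT)\subseteq\PressureT$ for the velocity, the choice $\teststressT=\curl\psi_\tri$ to eliminate the displacement and invoke (B) for the rotation, then (A) for the displacement, and componentwise testing for $K_\tri$.

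Your caution in Step 3 is warranted. The paper simply asserts $(\vorticityT\cdot\chi,\curl\psi_\tri)=(\div\psi_\tri,\vorticityT)$, which holds up to sign in 2D. In 3D, however, $\psi_\tri$ must be matrix-valued and the correct identity is $\operatorname{as}(\curl\psi)=-\div(\psi^\top-\tr(\psi)\bI)$. So (B) has to be read for this transformed field, exactly as you propose; this is automatic when all components of $\PsiT$ lie in the same scalar space.
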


\begin{proof}
Let $\tupleqT=(\displacementT,\vorticityT,\integratedvelocityT)\in H_\tri$.
By definition, $\mathcal B(\tuplevT,\tupleqT)=0$, for all $\tuplevT\in\tupleVT$.
Choosing $\tuplevT=(\underline{\bf{0}},\testpressureT)$ with arbitrary $\testpressureT\in\PressureT$ yields
\begin{equation}\label{eq:HT_test_pressure}
0=\mathcal B((\underline{\bf{0}},\testpressureT),(\displacementT,\vorticityT,\integratedvelocityT))=(\testpressureT,\div\integratedvelocityT)
\qquad \forall \testpressureT\in\PressureT .
\end{equation}
Since $\div\integratedvelocityT\in\div(\VelocityT)\subseteq \PressureT$, we may take
$\testpressureT=\div\integratedvelocityT$ in \eqref{eq:HT_test_pressure} and obtain
\[
\|\div\integratedvelocityT\|_0^2=(\div\integratedvelocityT,\div\integratedvelocityT)=0,
\]
hence $\div\integratedvelocityT=0$.
Next choose $\tuplevT=(\teststressT,0)$ with arbitrary $\teststressT\in\StressT$.
Then
\begin{equation}\label{eq:HT_constraint}
0=\mathcal B((\teststressT,0),(\displacementT,\vorticityT,\integratedvelocityT))
=(\displacementT,\div\teststressT)+(\vorticityT\cdot\chi,\teststressT)
\qquad \forall \teststressT\in\StressT .
\end{equation}
Pick $\teststressT=\curl\psi_\tri$ with $\psi_\tri\in\PsiT$. Using $\div(\curl\psi_\tri)=0$
elementwise gives
\[
0=(\displacementT,\div(\curl\psi_\tri))+(\vorticityT\cdot\chi,\curl\psi_\tri)
=(\div\psi_\tri,\vorticityT)\qquad \forall \psi_\tri\in\PsiT .
\]
By the discrete inf--sup \eqref{eq:B_disc_stokes_infsup} we conclude $\vorticityT=0$.
With $\vorticityT=0$, \eqref{eq:HT_constraint} reduces to
\[
(\displacementT,\div\teststressT)=0\qquad \forall \teststressT\in\StressT,
\]
and \eqref{eq:E1_disc_div_infsup} implies $\displacementT=0$.
Thus $\tupleqT=(\bf{0},\bf{0},\integratedvelocityT)$ and $\div\integratedvelocityT=0$, i.e.
\(
H_\tri\subseteq \{(\bf{0},\bf{0},\integratedvelocityT):\div\integratedvelocityT=0\}.
\)

Conversely, if $\tupleqT=(\bf{0},\bf{0},\integratedvelocityT)$ with $\div\integratedvelocityT=0$, then for any
$\tuplevT=(\teststressT,\testpressureT)\in\tupleVT$,
\[
\mathcal B(\tuplevT,\tupleqT)=(\testpressureT,\div\integratedvelocityT)=0,
\]
hence $\tupleqT\in H_\tri$ and \eqref{eq:HT_explicit} follows.

Now, let $\tuplevT=(\teststressT,\testpressureT)\in K_\tri$.
By definition,
$
\mathcal B(\tuplevT,\tupleqT)=0$, for all $\tupleqT=(\displacementT,\vorticityT,\integratedvelocityT)\in\tupleQT .
$
Testing with $\tupleqT=(\bf{0},\bf{0},\integratedvelocityT)$ for arbitrary $\integratedvelocityT\in\VelocityT$ gives
\[
0=\mathcal B((\teststressT,\testpressureT),(\bf{0},\bf{0},\integratedvelocityT))=(\testpressureT,\div\integratedvelocityT)
\qquad\forall \integratedvelocityT\in\VelocityT ,
\]
i.e. $(\testpressureT,\div\integratedvelocityT)=0$ for all $\integratedvelocityT\in\VelocityT$.
Testing with $\tupleqT=(\bf{0},\displacementT,\bf{0})$ for arbitrary $\displacementT\in\DisplacementT$ yields
\[
0=\mathcal B((\teststressT,\testpressureT),(\bf{0},\displacementT,\bf{0}))=(\displacementT,\div\teststressT)
\qquad\forall \displacementT\in\DisplacementT \ .
\]
Testing with $\tupleqT=(\bf{0},\testvorticityT \bf{0})$ for arbitrary $\testvorticityT\in\VorticityT$ gives
\[
0=\mathcal B((\teststressT,\testpressureT),(\bf{0},\testvorticityT, \bf{0}))=(\testvorticityT\cdot\chi,\teststressT)
\qquad\forall \testvorticityT\in\VorticityT,
\]
and every $(\teststressT,\testpressureT)\in K_\tri$ satisfies the three orthogonality conditions in \eqref{eq:KT_explicit}.
Conversely, let $(\teststressT,\testpressureT)\in\tupleVT$ satisfy
\[
(\testpressureT,\div\integratedvelocityT)=0 \quad \forall \integratedvelocityT\in\VelocityT,
\
(\displacementT,\div\teststressT)=0 \quad \forall \displacementT\in\DisplacementT
\text{ and }
(\testvorticityT\cdot\chi,\teststressT)=0 \quad \forall \testvorticityT\in\VorticityT.
\]
Then for every $\tupleqT=(\displacementT,\vorticityT, \integratedvelocityT)\in\tupleQT$,
\[
\mathcal B((\teststressT,\testpressureT),\tupleqT)
=
(\testpressureT,\div\integratedvelocityT)
+(\displacementT,\div\teststressT)
+(\vorticityT\cdot\chi,\teststressT)
=0.
\]
Thus $(\teststressT,\testpressureT)\in K_\tri$, and \eqref{eq:KT_explicit} follows.
\end{proof}
\begin{remark}
If, in addition, $\div(\VelocityT)=\PressureT$, then the first orthogonality
condition in \eqref{eq:KT_explicit} implies $\testpressureT=0$, and therefore
\[
K_\tri
=
\Big\{
(\teststressT,0)\in\tupleVT :
(\displacementT,\div\teststressT)=0 \ \forall \displacementT\in\DisplacementT,\ 
(\testvorticityT\cdot\chi,\teststressT)=0\ \forall \testvorticityT\in\VorticityT
\Big\}.
\]
\end{remark}

By Lemma~\ref{lem:discrete_kernels_E1B}, the discrete kernels
$H_\tri$ and $K_\tri$ satisfy the same structural constraints
as their continuous counterparts.
On $H_\tri$, any element has the form $(\bf{0},\bf{0},\integratedvelocityT)$ with
$\div\integratedvelocityT=0$. Hence, by uniform ellipticity of
$\answer{\kappa^{-1}}$,
\[\frac{1}{\Delta t}(\answer{\kappa^{-1}}\testvelocityT,\testvelocityT)
\ge  
\frac{1}{\answer{\overline\kappa}\,\Delta t}
\|\testvelocityT\|_{H(\div)}^2
\qquad \forall (\bf{0}, \bf{0}, \testvelocityT)\in H_\tri .
\]
On $K_\tri$, the divergence-free condition and the discrete weak
symmetry constraint eliminate the nullspace of the compliance operator.
Therefore, the uniform positivity of $\mathcal A$ implies 
\[
\mathcal A((\teststressT,\testpressureT),(\teststressT,\testpressureT))
\ge 
\answer{\left(2 \mu C_K^2\right)^{-1}}
\big(
\|\teststressT\|_{H(\div)}^2+\|\testpressureT\|_0^2
\big)
\qquad \forall (\teststressT,\testpressureT)\in K_\tri .
\]
With coercivity established, the well-posedness of the discrete problem 
\eqref{eq:discretegenericsaddlepoint} now depends solely on the 
satisfaction of the discrete inf-sup condition for the bilinear form $\mathcal{B}$, which we analyze in the following lemma.
\begin{lemma}
\label{lem:disc_infsup_Hperp}
Assume that $\div(\VelocityT)\subseteq \PressureT$ and that the
discrete elasticity stability conditions \textbf{(A)}--\textbf{(B)}
of Lemma~\ref{lem:discrete_kernels_E1B} hold.
Then there exists $\beta_\tri>0$, independent of the mesh-size, such that
\begin{equation}\label{eq:disc_infsup}
\inf_{0\neq \tupleqT\in H_\tri^\perp}\ 
\sup_{0\neq \tuplevT\in\tupleVT}
\frac{\mathcal B(\tuplevT,\tupleqT)}
{\|\tuplevT\|_{\tupleV}\,\|\tupleqT\|_{\tupleQ}}
\ \ge\ \beta_\tri 
\end{equation}
and the discrete problem
\eqref{eq:discretegenericsaddlepoint} admits a unique solution
$(\tupleuT,\tuplepT)\in\tupleVT\times\tupleQT$.
Moreover, the stability estimate
\[
\|\tupleuT\|_{\tupleV}+\|\tuplepT\|_{\tupleQ}
\ \le\
C\Big(\|\mathcal F\|_{\tupleV'}+\|\mathcal G\|_{\tupleQ'}\Big)
\]
holds with a constant $C>0$ independent of the mesh-size.
\end{lemma}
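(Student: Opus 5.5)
We mirror the continuous argument given before Theorem~\ref{thm:wellposedness}, replacing each continuous ingredient by its discrete counterpart. By Lemma~\ref{lem:discrete_kernels_E1B}, $H_\tri$ consists of triples $(\bf{0},\bf{0},\integratedvelocityT)$ with $\div\integratedvelocityT=0$ (using $\div(\VelocityT)\subseteq\PressureT$). Hence $H_\tri^\perp$ is the set of triples $(\displacementT,\vorticityT,\integratedvelocityT)$ whose velocity component is orthogonal to the discretely divergence-free subspace of $\VelocityT$. On this complement the divergence is injective. Because the velocity part is orthogonal to $\ker\div|_{\VelocityT}$, we obtain $\|\integratedvelocityT\|_0\lesssim\|\div\integratedvelocityT\|_0$ with a mesh-independent constant. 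This is the standard discrete Poincaré-type bound for $\Hdiv$ mixed spaces, and it follows from a bounded commuting (Fortin) projection onto $\VelocityT$: solve a continuous problem whose solution has divergence $\div\integratedvelocityT$, then project. Consequently $\|\integratedvelocityT\|_{H(\div)}\simeq\|\div\integratedvelocityT\|_0$ on $H_\tri^\perp$.

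Next, we build a discrete stress combining (A) and (B), which is the discrete analogue of \eqref{eq:infsupelasticity}. Given $(\displacementT,\vorticityT)$, condition (A) first provides $\teststressT^1\in\StressT$ with $(\testdisplacementT,\div\teststressT^1)=(\testdisplacementT,\displacementT)$ for all $\testdisplacementT\in\DisplacementT$ and $\|\teststressT^1\|_{H(\div)}\lesssim\beta_u^{-1}\|\displacementT\|_0$. Its skew part may spoil the rotation term, so we correct it. We choose $\psi_\tri\in\PsiT$ from (B) with $(\div\psi_\tri,\testvorticityT)$ representing the residual functional $\testvorticityT\mapsto(\vorticityT,\testvorticityT)-(\teststressT^1,\testvorticityT\cdot\chi)$, with $\|\psi_\tri\|_1\lesssim\beta_\xi^{-1}(\|\vorticityT\|_0+\|\teststressT^1\|_0)$. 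We then set $\teststressT_*=\teststressT^1+\curl\psi_\tri$. Since $\div\curl\psi_\tri=0$, the divergence constraint is untouched. The identity used in the proof of Lemma~\ref{lem:discrete_kernels_E1B}, $(\testvorticityT\cdot\chi,\curl\psi_\tri)=(\div\psi_\tri,\testvorticityT)$ (up to a fixed sign/scaling convention), gives $(\teststressT_*,\vorticityT\cdot\chi)\gtrsim\|\vorticityT\|_0^2$. We also have $\|\teststressT_*\|_{H(\div)}\lesssim\|\displacementT\|_0+\|\vorticityT\|_0$, because $\|\curl\psi_\tri\|_{H(\div)}=\|\curl\psi_\tri\|_0\lesssim\|\psi_\tri\|_1$. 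To make (B) usable as a surjection, we apply the Riesz representation in $\VorticityT$ together with the closed-range consequence of the discrete inf--sup; this is standard Babuška--Brezzi finite-dimensional reasoning.

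Then we take $\testpressureT_*=\div\integratedvelocityT$, which is admissible since $\div(\VelocityT)\subseteq\PressureT$. As in the continuous case,
\[
\mathcal B((\teststressT_*,\testpressureT_*),\tupleqT)\gtrsim\|\div\integratedvelocityT\|_0^2+\|\displacementT\|_0^2+\|\vorticityT\|_0^2\gtrsim\|\tupleqT\|_{\tupleQ}^2,
\]
where the last step uses the norm equivalence on $H_\tri^\perp$. In addition, $\|(\teststressT_*,\testpressureT_*)\|_{\tupleV}\lesssim\|\tupleqT\|_{\tupleQ}$. This yields \eqref{eq:disc_infsup} with $\beta_\tri$ depending only on $\beta_u$, $\beta_\xi$ and the discrete Poincaré constant.

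Finally, the discrete kernel coercivities of $\mathcal A$ on $K_\tri$ and of $\mathcal C$ on $H_\tri$ were established just before the lemma with mesh-independent constants. Together with continuity and \eqref{eq:disc_infsup}, they place us in the setting of Theorem 4.3.1 of \cite{BoffiBrezziFortin:2013} applied in $\tupleVT\times\tupleQT$. This gives existence, uniqueness and the stated bound with $C$ independent of $h$. We expect the main obstacle to be the mesh-uniform discrete Poincaré bound for the velocity on $H_\tri^\perp$, which needs a uniformly bounded commuting interpolant, together with correctly tracking the cross term $(\teststressT^1,\vorticityT\cdot\chi)$ in the correction step. For the former, we would first invoke the bounded cochain projections of the Raviart--Thomas/BDM families used later in the paper.
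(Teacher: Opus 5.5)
Your proposal is correct and follows essentially the same route as the paper. It uses the norm equivalence $\|\integratedvelocityT\|_{H(\div)}\simeq\|\div\integratedvelocityT\|_0$ on $H_\tri^\perp$, the choice $q_*=\div\integratedvelocityT$, and a stress test function furnished by (A)--(B), then closes with Theorem~4.3.1 of \cite{BoffiBrezziFortin:2013} and the kernel coercivities stated before the lemma. The difference is that you justify two steps the paper only asserts: the mesh-uniform discrete Poincar\'e bound, via a uniformly bounded commuting projection (a hypothesis not listed in the lemma, but available for the RT/BDM-type families considered), and the explicit correction $\teststress_*=\teststressT^1+\curl\psi_\tri$ built from (A) and (B).
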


\begin{proof}
Let $\tupleqT=(\displacementT,\vorticityT,\integratedvelocityT)\in H_\tri^\perp$ be arbitrary. By definition of $H_\tri^\perp$, the velocity component $\integratedvelocityT\in\VelocityT$
satisfies the orthogonality condition defining $H_\tri^\perp$. Hence
\begin{equation}\label{eq:vel_equiv_on_Hperp}
\|\integratedvelocityT\|_{H(\div)} \eqsim \|\div\integratedvelocityT\|_0
\qquad\text{for }\integratedvelocityT\in H_\tri^\perp.
\end{equation}
Choose
\[
q_* := \div\integratedvelocityT \in \PressureT ,
\]
which is admissible since $\div(\VelocityT)\subseteq \PressureT$.
Next, apply the assumptions (A) and (B) from Lemma~\ref{lem:discrete_kernels_E1B} 
to obtain a stress test function $\teststress_*\in\StressT$ such that
\begin{equation}\label{eq:tau_star_props}
(\displacementT,\div\teststress_*)+(\vorticityT\cdot\chi,\teststress_*)
\ \gtrsim\ \|\displacementT\|_0^2+\|\vorticityT\|_0^2,
\qquad
\|\teststress_*\|_{H(\div)}\ \lesssim\ \|\displacementT\|_0+\|\vorticityT\|_0 .
\end{equation}
Define $\tuplev_*:=(\teststress_*,q_*)\in\tupleVT$.
Then, by definition of $\mathcal B$,
\[
\mathcal B(\tuplev_*,\tupleqT)
=
(q_*,\div\integratedvelocityT)
+(\displacementT,\div\teststress_*)
+(\vorticityT\cdot\chi,\teststress_*).
\]
Using $q_*=\div\integratedvelocityT$ and \eqref{eq:tau_star_props},
\begin{equation}\label{eq:B_lower_bound}
\mathcal B(\tuplev_*,\tupleqT)
\ \gtrsim\
\|\div\integratedvelocityT\|_0^2+\|\displacementT\|_0^2+\|\vorticityT\|_0^2 .
\end{equation}
Moreover,
\[
\|\tuplev_*\|_{\tupleV}
\lesssim
\|\teststress_*\|_{H(\div)}+\|q_*\|_0
\lesssim
\|\displacementT\|_0+\|\vorticityT\|_0+\|\div\integratedvelocityT\|_0 .
\]
Combining with \eqref{eq:vel_equiv_on_Hperp} gives
\[
\|\tupleqT\|_{\tupleQ}
=
\big(\|\integratedvelocityT\|_{H(\div)}^2+\|\displacementT\|_0^2+\|\vorticityT\|_0^2\big)^{1/2}
\ \eqsim\
\big(\|\div\integratedvelocityT\|_0^2+\|\displacementT\|_0^2+\|\vorticityT\|_0^2\big)^{1/2}.
\]
Therefore, \eqref{eq:B_lower_bound} implies
\[
\frac{\mathcal B(\tuplev_*,\tupleqT)}
{\|\tuplev_*\|_{\tupleV}\,\|\tupleqT\|_{\tupleQ}}
\ \gtrsim\ 1,
\]
with constants independent of the mesh-size. Taking the supremum over $\tuplevT\in\tupleVT$
and then the infimum over $\tupleqT\in H_\tri^\perp\setminus\{0\}$ yields
\eqref{eq:disc_infsup}.
 \answer{The constant $C$ has the same structure as in Theorem~\ref{thm:wellposedness}.}
\end{proof}

\answer{
In contrast to the evolution-type error analysis of Lee~\cite{Lee:2016}, the
preceding analysis treats each time step as a perturbed saddle-point problem
with fully explicit stability constants, whose explicit form is the basis for
the convergence analysis of the tuned splitting scheme in Section~5. As a
by-product, we} can now list standard finite element families that satisfy
assumptions \textbf{(A)}--\textbf{(B)} of
Lemma~\ref{lem:discrete_kernels_E1B}
as well as the Darcy compatibility condition
$\div(\VelocityT)\subseteq \PressureT$.
We distinguish between simplicial and tensor-product meshes. 

\subsection{Finite elements on simplicial meshes}

We start with $H(\div)$-conforming finite element spaces on simplices
and define the global velocity and stress spaces by
\[
\VelocityT
:=
\left\{
\testvelocity \in \hdiv :
\ \testvelocity|_T \in \mathbb Y_\ell(T)
\ \forall T\in\tri
\right\},
\text{ and }
\StressT
:=
\left\{
\teststress \in \hdiv^d :
\ (\teststress_i)|_T \in \mathbb X_k(T)
\ \forall T\in\tri,\ i=1,\dots,d
\right\},
\]
where the local spaces $\mathbb X_k(T)$ and $\mathbb Y_\ell(T)$ are
chosen from the classical families
\[
\mathbb X_k(T),\ \mathbb Y_\ell(T)
\in
\{\RT_k(T),\BDM_k(T),\BDFM_k(T),\ABF_k(T)\},
\]
with
\begin{align*}
\RT_k(T)
&:= \mathbb P_k(T)^d + \mathbf{x}\,\mathbb P_k(T),
\\
\BDM_k(T)
&:= \mathbb P_k(T)^d,
\qquad k\ge1,
\\
\ABF_k(T)
&:= \mathbb P_k(T)^d + \mathbf{x}\,\mathbb P_k(T)^d,
\qquad k\ge1,
\\
\BDFM_k(T)
&:= \RT_k(T)\oplus \bigl(b_T\,\mathbb P_{k-1}(T)^d\bigr),
\qquad k\ge1.
\end{align*}
Here $\mathbf{x}$ denotes the identity map on $T$, and
\[
b_T=\prod_{i=1}^{d+1}\lambda_i
\]
is the element bubble function defined in terms of the barycentric
coordinates $\lambda_i$ of $T$.

The scalar spaces $\DisplacementT$ and $\PressureT$ are then chosen
according to the divergence constraints appearing in
Lemma~\ref{lem:discrete_kernels_E1B}. More precisely, the displacement
space $\DisplacementT$ is selected so that assumption \textbf{(A)} holds,
while the pressure space $\PressureT$ is chosen such that
\[
\div(\VelocityT)\subseteq \PressureT.
\]
For the classical $H(\div)$-conforming families introduced above, the
divergence operator is surjective onto the corresponding polynomial
spaces, i.e.,
\[
\div \RT_k(T)=\mathbb P_k(T),
\qquad
\div \BDM_k(T)=\div \BDFM_k(T)=\div \ABF_k(T)=\mathbb P_{k-1}(T).
\]
Hence, the minimal compatible choices for $\DisplacementT$ and
$\PressureT$ coincide with the discontinuous polynomial spaces
\[
\DisplacementT=
\mathbb P_k(\tri)^d
\quad\text{if }\StressT\text{ is based on }\RT_k,
\qquad
\DisplacementT=
\mathbb P_{k-1}(\tri)^d
\quad\text{if }\StressT\text{ is based on }
\BDM_k,\ \BDFM_k,\ \text{or }\ABF_k,
\]
and similarly
\[
\PressureT=
\mathbb P_\ell(\tri)
\quad\text{if }\VelocityT\text{ is based on }\RT_\ell,
\qquad
\PressureT=
\mathbb P_{\ell-1}(\tri)
\quad\text{if }\VelocityT\text{ is based on }
\BDM_\ell,\ \BDFM_\ell,\ \text{or }\ABF_\ell.
\]
It remains to choose the spaces $\PsiT$ and $\VorticityT$ appearing in
assumption \textbf{(B)}. To this end we consider the Stokes-stable pairs
\[
\PsiT :=
\{\psi\in H^1_0(\Omega)^d:\ \psi|_T\in\Psi_r(T)\ \forall T\in\tri\},
\qquad
\VorticityT :=
\{\vorticity\in L^2(\Omega)^{2d-3}:\ \vorticity|_T\in\Xi_r(T)\ \forall T\in\tri\}
\]
with the following possibilities.
\begin{itemize}

\item Taylor--Hood elements $
(\Psi_r(T),\Xi_r(T))
=
(\mathbb P_{r+1}(T)^d,\mathbb P_r(T)^{2d-3})$
for $r\ge1$. Since
$
\curl \Psi_r(\tri)\subseteq \mathbb P_r(\tri)^d,
$
the inclusion $\curl\PsiT\subseteq\StressT$ holds provided that $r\le k$. Here, the $\BDM$ case coincides with the
Arnold--Falk--Winther choice.
\item Mini elements $
(\Psi(T),\Xi(T))
=
(\mathbb P_1(T)^d \oplus B_{d+1}(T)^d,\mathbb P_1(T)^{2d-3}),
$
where $B_{d+1}(T)$ denotes the element bubble  of degree $d+1$.
Since
$
\curl \Psi(\tri)\subseteq \mathbb P_d(\tri)^d,
$
the inclusion $\curl\PsiT\subseteq\StressT$ holds provided that $d\le k$.

\item {Scott--Vogelius elements}
$
(\Psi_r(T),\Xi_r(T))
=
(\mathbb P_r(T)^d,\mathbb P_{r-1}(T)^{2d-3}),
$ for $r\ge 2d$ under 
additional mesh conditions.
Since
$
\curl \Psi_r(\tri)\subseteq \mathbb P_{r-1}(\tri)^d,
$
the inclusion $\curl\PsiT\subseteq\StressT$ holds provided that $r\le k+1$.
\end{itemize}

\subsection{Finite elements on tensor-product meshes}

We now consider tensor-product meshes and let $\tri$ be a shape-regular
partition of $\Omega$ into quadrilaterals for $d=2$ or hexahedra for
$d=3$. On such meshes, let $\mathbb Q_{\mathbf r}(T)$ denote the tensor-product polynomial
space on $T$ with multi-degree
$\mathbf r=(r_1,\dots,r_d)$, i.e.
\[
\mathbb Q_{\mathbf r}(T)
:=
\operatorname{span}\{
x_1^{\alpha_1}\cdots x_d^{\alpha_d}:
0\le \alpha_i\le r_i,\ i=1,\dots,d
\}.
\]
In particular, we write
$\mathbb Q_k(T):=\mathbb Q_{(k,\dots,k)}(T)$.
As before, we define the global velocity and stress spaces by
\[
\VelocityT :=
\left\{
\testvelocity \in \hdiv :
\ \testvelocity|_T \in \mathbb Y_\ell(T)
\ \forall T\in\tri
\right\},
\qquad
\StressT :=
\left\{
\teststress \in \hdiv^d :
\ (\teststress_i)|_T \in \mathbb X_k(T)
\ \forall T\in\tri,\ i=1,\dots,d
\right\}.
\]
The local spaces $\mathbb X_k(T)$ and $\mathbb Y_\ell(T)$ are chosen
from the classical tensor-product $H(\div)$ families
\[
\mathbb X_k(T),\ \mathbb Y_\ell(T)
\in
\{\RT_k(T),\BDDF_k(T)\}.
\]
with
\[
\RT_k(T)
=
\begin{cases}
\mathbb Q_{k+1,k}(T)\times \mathbb Q_{k,k+1}(T), & d=2,
\\
\mathbb Q_{k+1,k,k}(T)\times
\mathbb Q_{k,k+1,k}(T)\times
\mathbb Q_{k,k,k+1}(T),
& d=3,
\end{cases}\text{   and }
\BDDF_k(T)
:=
\mathbb Q_k(T)^d,
\qquad k\ge1.
\]
For the classical tensor-product $H(\div)$-conforming families
introduced above, the divergence operator maps onto the
corresponding scalar polynomial spaces, i.e. 
$
\div \RT_k(T)=\mathbb Q_k(T),
$ and $
\div \BDDF_k(T)=\mathbb Q_{k-1}(T).
$
Consequently, the minimal compatible scalar spaces are
\[
\DisplacementT=
\mathbb Q_k(\tri)^d
\quad\text{if }\StressT\text{ is based on }\RT_k,
\qquad
\DisplacementT=
\mathbb Q_{k-1}(\tri)^d
\quad\text{if }\StressT\text{ is based on }\BDDF_k,
\]
and similarly
\[
\PressureT=
\mathbb Q_\ell(\tri)
\quad\text{if }\VelocityT\text{ is based on }\RT_\ell,
\qquad
\PressureT=
\mathbb Q_{\ell-1}(\tri)
\quad\text{if }\VelocityT\text{ is based on }\BDDF_\ell.
\]
It remains to choose the spaces $\PsiT$ and $\VorticityT$ appearing in
assumption \textbf{(B)}. To this end we consider the Stokes-stable pairs
\[
\PsiT :=
\{\psi\in H^1_0(\Omega)^d:\ \psi|_T\in\Psi_r(T)\ \forall T\in\tri\},
\qquad
\VorticityT :=
\{\vorticity\in L^2(\Omega)^{2d-3}:\ \vorticity|_T\in\Xi_r(T)\ \forall T\in\tri\}.
\]
A natural tensor-product analogue is given by the Taylor--Hood family
\[
(\Psi_r(T),\Xi_r(T))
=
(\mathbb Q_{r+1}(T)^d,\mathbb Q_r(T)^{2d-3}),
\qquad r\ge1.
\]
Since $
\curl \Psi_r(\tri)\subseteq \mathbb Q_r(\tri)^d$, 
the inclusion $\curl\PsiT\subseteq\StressT$ holds provided that
\[
r\le k \quad\text{if }\mathbb X_k(T)=\RT_k(T),
\qquad
r\le k-1 \quad\text{if }\mathbb X_k(T)=\BDDF_k(T).
\]
Another possibility is the Mini element $
(\Psi(T),\Xi(T))
=
(\mathbb Q_1(T)^d \oplus B(T)^d,\mathbb Q_1(T)^{2d-3})
$, 
where $B(T)=\prod_{i=1}^{d}(1-x_i^2)$ denotes the tensor-product bubble function.
Since
$
\curl \Psi(\tri)\subseteq \mathbb Q_1(\tri)^d
$, 
the inclusion $\curl\PsiT\subseteq\StressT$ holds provided that
\[
1\le k \quad\text{if }\mathbb X_k(T)=\RT_k(T),
\qquad
2\le k \quad\text{if }\mathbb X_k(T)=\BDDF_k(T).
\]

\section{Iterative coupling for the fully-discrete system}
\label{sec:iterativecoupling}
Our aim is to design an iterative coupling scheme for the fully-discrete formulation of the Biot equations~\eqref{eq:fully-discrete}, decoupling flow and mechanics; the scheme and discussion will be independent of the particular spatial discretization presented in Section~\ref{sec:fem}. \answer{As discussed before, due to the use of a stress-based formulation}, the coupling between the flow and mechanics subproblems is in fact symmetric. \answer{This is tightly connected to the above highlighted minimization character of the five-field formulation. The coupling across flow and deformation occurs solely through a quadratic energy, while constraints act merely onto the single subphysics. As the partial differential equations are associated to suitable derivatives of the energy, the flow-deformation coupling in form of stress-pressure coupling terms is finally symmetric}; for a more detailed discussion on the inherent gradient flow structure and resulting symmetries, we refer to~\cite{both2019gradient}. Symmetric couplings can be decoupled in iterative fashion by a simple alternating minimization approach, and its convergence follows from abstract results solely relying on convexity and continuity properties~\cite{both2022rate}. As discussed in~\cite{both2019gradient} alternating minimization can be identified with the common fixed-stress split, cf., e.g.,~\cite{kim2011stability}, \answer{requiring reformulation and reinterpretation of primary variables when converting to a two-field formulation}.

Inspired by previous problem-specific developments on tuned stabilization of the fixed-stress split, e.g.~\cite{mikelic2013convergence,BothBorregalesNordbottenKumarRadu:2017,storvik2019optimization}, we investigate the possibility of using stabilization, introducing the possibility for tuning the convergence speed. However, due to the symmetric coupling, instead of adding \answer{positive} stabilization, an improved approximation of the Schur complement is in fact expected by applying negative stabilization, i.e., seemingly destabilization. This stands in contrast to previous attempts to accelerate splitting of symmetrically coupled prblems, cf., e.g.,~\cite{nuca2024splitting}, while a similar conclusion has been arrived in~\cite{both2022iterative} for a different poromechanics model with symmetric coupling between displacements and fluxes. 

\paragraph{Fixed-stress split} Finally, resembling the same approach as the fixed-stress split, we consider a two-step iterative coupling scheme with $i\geq 1$, denoting the iteration index \answer{and consistently applied as superscript indicating the iteration}. For simplicity, we assume initialization of the scheme at $i=0$ through the use of the data at the previous time step. Then the following iteration is repeated until convergence in any user-defined norm.

\paragraph{Step 1: Flow}
Let $\totalstressT^{i-1}\in \StressT$ be given, seek $(\pressureT^{i},\integratedvelocityT^{i}) \in \PressureT \times \VelocityT$ such that for all 
$(\testpressureT,
\testvelocityT) \in \PressureT \times \VelocityT$ it holds
\begin{equation}
\label{eq:splitting-flow}
\begin{aligned}
 &
\tilde{c}_0\left(\pressureT^{i},\testpressureT\right) - \beta\left(\testpressureT^{i} - \testpressureT^{i-1},\testpressureT\right) &&&
+ (\div \integratedvelocityT^{i},\testpressureT)&= \langle \tilde{f},\testpressureT \rangle - \tilde \alpha
\left(
\operatorname{tr} \totalstressT^{i-1},\testpressureT
\right)
\\
&
(\pressureT^{i},\div\testvelocityT)
&&&-\frac{1}{\Delta t}(\kappa^{-1} \integratedvelocityT, \testvelocityT)
&=0.
\end{aligned}
\end{equation}
\paragraph{Step 2: Mechanics} Seek $(\totalstressT^{i},\displacementT^{i},\vorticityT^{i}) \in \StressT \times \DisplacementT \times \VorticityT$ such that for all 
$(\teststressT,
\testdisplacementT,
\testvorticityT
 ) \in \StressT \times \DisplacementT \times \VorticityT$ it holds
\begin{equation}
\label{eq:splitting-mechanics}
\begin{aligned}
(\mathbb{A} \totalstressT^{i},\teststressT)
&+(\displacementT^{i},\div \teststressT)
&+(\vorticityT^{i} \cdot \chi,\teststressT)
&=-{\tilde \alpha } ( \pressureT^{i},\tr(\teststressT))
\\
(\div \totalstressT^{i},\testdisplacementT)&&&=-({\bf g},\testdisplacementT)
\\
(\totalstressT^{i},\testvorticityT\cdot \chi ) 
&&&= 0.
\end{aligned}
\end{equation}

\begin{remark}[Standard fixed-stress split]
The choice $\beta=0$ resembles the standard fixed-stress split, iterating between solving the flow problem under fixed stress conditions, and updating the mechanics variables subsequently. It is unconditionally convergent for the symmetric problem~\cite{both2019gradient,both2022rate}.
\end{remark}

\begin{remark}[Tuned fixed-stress split]
\answer{It is expected that the tuned fixed-stress split converges for any $\beta\leq 0$, yet with worse convergence rate than for the standard fixed-stress split. Acceleration is possible for $\beta >0$, yet there is an upper bound until which convergence can be guaranteed. This upper bound, associated to maximal stabilization is associated to the coercivity of the Schur complement. The following Lemma investigates this upper bound.}
\end{remark}

We employ a problem-specific analysis to identify suitable values for $\beta\geq 0$ with optimized convergence rate in terms of the fluid pressure, following mostly the techniques in~\cite{BothBorregalesNordbottenKumarRadu:2017}.

\begin{lemma}[Convergence of a tuned fixed-stress split\label{lemma:splitting}]
Let $i\geq 1$ and $(\totalstressT,\pressureT,\displacementT,\vorticityT,\integratedvelocityT) \in \bf{X}_\tri$ and $(\totalstressT^{i},\pressureT^{i},\displacementT^{i},\vorticityT^{i},\integratedvelocityT^{i}) \in \bf{X}_\tri$ denote the fully-discrete and the iterative solutions due to~\eqref{eq:fully-discrete} and~\eqref{eq:splitting-flow}--\eqref{eq:splitting-mechanics}, respectively. Let
\begin{align}
\label{eq:error-quantities}
e_{\totalstress}^i := \totalstressT^{i} - \totalstressT,\ \  
e_p^i := \pressureT^{i} - \pressureT,\ \ 
e_{\displacement}^i := \displacementT^{i} - \displacementT,\ \ 
e_{\vorticity}^i := \vorticityT^{i} - \vorticityT,\ \ 
e_{\integratedvelocity}^i := \integratedvelocityT^{i} - \integratedvelocityT,
\end{align}
denote the respective error quantities. Then for $\beta \leq \frac{1}{2}\frac{d\alpha^2}{2\mu+d\lambda}$ it holds
\begin{align*}
\left(2 c_0 + 2C_\mathrm{P}^2\underline{\kappa}\Delta t + \frac{d\alpha^2}{2\mu+d\lambda} - \beta \right) \| e_p^{i}\|^2 
\leq \left(\frac{d\alpha^2}{2\mu+d\lambda} - \beta \right) \|e_p^{i-1}\|^2,
\end{align*}
where $C_\mathrm{P}$ denotes a Poincar\'e constant, and $\underline{\kappa}$ is the lower bound of the permeability $\kappa$. Thus, the fixed-stress split ($\beta=0$) converges, and the predicted convergence is optimized for the destabilization parameter $\beta_\mathrm{max} :=
\frac{1}{2}\frac{d\alpha^2}{2\mu+d\lambda}$. 
\end{lemma}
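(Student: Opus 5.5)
The plan is to work entirely with the error equations. Subtract the fully-discrete system \eqref{eq:fully-discrete} from the splitting steps \eqref{eq:splitting-flow}--\eqref{eq:splitting-mechanics}, using the error quantities \eqref{eq:error-quantities}. The flow errors then satisfy
\begin{align*}
\tilde c_0(e_p^i,\testpressureT)-\beta(e_p^i-e_p^{i-1},\testpressureT)+(\div e_{\integratedvelocity}^i,\testpressureT)&=-\tilde\alpha(\tr e_{\totalstress}^{i-1},\testpressureT),\\
(e_p^i,\div\testvelocityT)-\tfrac{1}{\Delta t}(\kappa^{-1}e_{\integratedvelocity}^i,\testvelocityT)&=0,
\end{align*}
and the mechanics errors satisfy the homogeneous elasticity system with right-hand side $-\tilde\alpha(e_p^i,\tr\teststressT)$. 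In the mechanics system, $e_{\totalstress}^i$ is weakly divergence free and weakly symmetric. Throughout I write $b:=\frac{d\alpha^2}{2\mu+d\lambda}=d\,\tilde\alpha\,\alpha$ and $\tilde c_0=c_0+b$.

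\emph{Step 1 (mechanics bound).} Test the mechanics error equations with $\teststressT=e_{\totalstress}^i$, $\testdisplacementT=e_{\displacement}^i$, $\testvorticityT=e_{\vorticity}^i$. The multiplier terms cancel, which gives
\[
(\mathbb A e_{\totalstress}^i,e_{\totalstress}^i)=-\tilde\alpha(e_p^i,\tr e_{\totalstress}^i).
\]
Splitting \eqref{eq:inverse-stiffness} into deviatoric and spherical parts yields
\[
(\mathbb A\teststress,\teststress)=\tfrac{1}{2\mu}\|\operatorname{dev}\teststress\|_0^2+\tfrac{1}{d(2\mu+d\lambda)}\|\tr\teststress\|_0^2 .
\]
Since $\tilde\alpha^2 d(2\mu+d\lambda)=b$, this implies the key estimate
\[
\frac{\tilde\alpha^2}{b}\|\tr e_{\totalstress}^i\|_0^2\le -\tilde\alpha(e_p^i,\tr e_{\totalstress}^i),
\]
and hence $\tilde\alpha\|\tr e_{\totalstress}^i\|_0\le b\|e_p^i\|_0$. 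The same estimate holds at iteration $i-1$.

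\emph{Step 2 (flow energy identity).} Test the flow error equations with $\testpressureT=e_p^i$ and $\testvelocityT=e_{\integratedvelocity}^i$, and subtract the second from the first. Using the polarization identity $2(a-b,a)=\|a\|^2-\|b\|^2+\|a-b\|^2$ for the $\beta$-term, this gives
\[
\tilde c_0\|e_p^i\|^2-\tfrac\beta2\big(\|e_p^i\|^2-\|e_p^{i-1}\|^2+\|e_p^i-e_p^{i-1}\|^2\big)+\tfrac1{\Delta t}(\kappa^{-1}e_{\integratedvelocity}^i,e_{\integratedvelocity}^i)=-\tilde\alpha(\tr e_{\totalstress}^{i-1},e_p^i).
\]
The Darcy term is converted into a pressure term through the second flow error equation. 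Choosing $\testvelocityT$ with $\div\testvelocityT=e_p^i$ (discrete inf--sup, valid since $\div\VelocityT\subseteq\PressureT$) and applying Cauchy--Schwarz bounds $\|e_p^i\|$ by $\|e_{\integratedvelocity}^i\|$. This produces the contribution $C_\mathrm P^2\underline\kappa\Delta t\|e_p^i\|^2$, with $C_\mathrm P$ absorbing the inf--sup constant.

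\emph{Step 3 (right-hand side and the $\beta$-coupling).} This is the step I expect to be the main obstacle. The naive route is Young's inequality combined with Step 1,
\[
-\tilde\alpha(\tr e_{\totalstress}^{i-1},e_p^i)\le\tfrac b2\|e_p^i\|^2+\tfrac{\tilde\alpha^2}{2b}\|\tr e_{\totalstress}^{i-1}\|^2\le\tfrac b2\|e_p^i\|^2+\tfrac b2\|e_p^{i-1}\|^2 .
\]
After multiplying by $2$, this already gives the claim for $\beta=0$. For $\beta>0$, however, the term $-\tfrac\beta2\|e_p^i-e_p^{i-1}\|^2$ sits on the wrong side of the identity. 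To handle it, I would first rewrite
\[
e_p^i=e_p^{i-1}+(e_p^i-e_p^{i-1}),
\qquad
-\tilde\alpha(\tr e_{\totalstress}^{i-1},e_p^{i-1})\ge\tfrac{\tilde\alpha^2}{b}\|\tr e_{\totalstress}^{i-1}\|^2 \quad\text{(Step 1)}.
\]
This splits the right-hand side into a term controlled by $-\tfrac{\tilde\alpha^2}{b}\|\tr e_{\totalstress}^{i-1}\|^2$ and a cross term with the increment $e_p^i-e_p^{i-1}$. The cross term is then treated with a weighted Young inequality, so that its increment part is absorbed by $\tfrac\beta2\|e_p^i-e_p^{i-1}\|^2$. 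Balancing the weights so that all leftover $\|\tr e_{\totalstress}^{i-1}\|^2$ contributions carry a nonpositive sign is where the restriction $\beta\le b/2$ should appear. Collecting the terms and multiplying by $2$ should yield $(2c_0+2C_\mathrm P^2\underline\kappa\Delta t+b-\beta)\|e_p^i\|^2\le(b-\beta)\|e_p^{i-1}\|^2$.

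\emph{Step 4 (optimization).} The contraction factor is $(b-\beta)/(2c_0+2C_\mathrm P^2\underline\kappa\Delta t+b-\beta)<1$, so the scheme converges for $\beta=0$. The factor is monotonically decreasing in $\beta$, so on the admissible range $\beta\le b/2$ it is minimized at $\beta_{\max}=b/2$.
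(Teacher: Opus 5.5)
Your Steps 1, 2 and 4 are fine. For $\beta\le 0$ your Young argument also closes, since $-\tfrac{\beta}{2}\|e_p^i-e_p^{i-1}\|^2$ is then nonnegative and can be dropped. The gap is Step 3 for $0<\beta\le b/2$, which is the case the lemma is really about.

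Two things go wrong there. First, after writing $e_p^i=e_p^{i-1}+(e_p^i-e_p^{i-1})$, the piece $-\tilde\alpha(\tr e_{\totalstress}^{i-1},e_p^{i-1})=(\mathbb{A}e_{\totalstress}^{i-1},e_{\totalstress}^{i-1})$ sits on the right-hand side with a plus sign. Step 1 is a \emph{lower} bound for this term, so it cannot estimate it from above; the term is not ``controlled by $-\tfrac{\tilde\alpha^2}{b}\|\tr e_{\totalstress}^{i-1}\|^2$''. Second, the increment part of a Young inequality cannot be absorbed by $\tfrac{\beta}{2}\|e_p^i-e_p^{i-1}\|^2$, because, as you note yourself, that term enters with the wrong sign. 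So your bookkeeping contains no positive multiple of $\|e_p^i-e_p^{i-1}\|^2$, and the condition $\beta\le b/2$ has nowhere to enter. The missing idea is to use the $b$-part of $\tilde c_0\|e_p^i\|^2$ together with $e_p^{i-1}$ symmetrically, i.e.\ to work with $e_p^i+e_p^{i-1}$ rather than with $e_p^{i-1}$ plus the increment.

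Concretely, split $e_p^i=(e_p^i+e_p^{i-1})-e_p^{i-1}$. Apply Young to the first part and Step 1 to the second, which now enters with the favourable sign:
\[
-\tilde\alpha(\tr e_{\totalstress}^{i-1},e_p^i)\le \tfrac{\tilde\alpha^2}{b}\|\tr e_{\totalstress}^{i-1}\|^2+\tfrac{b}{4}\|e_p^i+e_p^{i-1}\|^2-\tfrac{\tilde\alpha^2}{b}\|\tr e_{\totalstress}^{i-1}\|^2=\tfrac{b}{4}\|e_p^i+e_p^{i-1}\|^2 .
\]
Insert $\tfrac b4\|e_p^i+e_p^{i-1}\|^2=\tfrac b2\|e_p^i\|^2+\tfrac b2\|e_p^{i-1}\|^2-\tfrac b4\|e_p^i-e_p^{i-1}\|^2$ into your Step 2 identity with $\tilde c_0=c_0+b$. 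This yields
\[
\Bigl(c_0+\tfrac{b-\beta}{2}\Bigr)\|e_p^i\|^2+\tfrac{1}{\Delta t}(\kappa^{-1}e_{\integratedvelocity}^i,e_{\integratedvelocity}^i)+\tfrac{b-2\beta}{4}\|e_p^i-e_p^{i-1}\|^2\le\tfrac{b-\beta}{2}\|e_p^{i-1}\|^2 .
\]
Here the $\|\tr e_{\totalstress}^{i-1}\|^2$ contributions cancel exactly rather than being balanced, and $\beta\le b/2$ is precisely the sign condition on the increment term. The claim then follows with your Poincar\'e step.

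This is the same intermediate inequality the paper reaches, but the paper gets the key bound $-\tilde\alpha(\tr e_{\totalstress}^{i-1},e_p^i)\le\tfrac b4\|e_p^i+e_p^{i-1}\|^2$ differently. It tests the mechanics error equation at iteration $i$ with $-e_{\totalstress}^{i-1}$, turning the coupling term into $(\mathbb{A}e_{\totalstress}^i,e_{\totalstress}^{i-1})$. It then polarizes and bounds $(\mathbb{A}(e_{\totalstress}^i+e_{\totalstress}^{i-1}),e_{\totalstress}^i+e_{\totalstress}^{i-1})$ by $b\|e_p^i+e_p^{i-1}\|^2$ via the mechanics equations for the sum. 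The corrected version of your route is slightly leaner, since it needs only the Step 1 energy estimate at iteration $i-1$.

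A minor point in Step 2: the discrete inf--sup used for the Poincar\'e bound requires $\PressureT\subseteq\div\VelocityT$ (e.g.\ equality, as for Raviart--Thomas), not the inclusion $\div\VelocityT\subseteq\PressureT$ you cite.
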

\begin{proof}
Let $\beta \geq 0$ (to be specified later). Taking the difference between~\eqref{eq:splitting-flow}--\eqref{eq:splitting-mechanics} and~\eqref{eq:fully-discrete}, the error quantity $(e_{\totalstress}^i, e_p^i, e_{\displacement}^i, e_{\vorticity}^i, e_{\integratedvelocity}^i) \in {\bf X}_\tri$ satisfies the error equations
\begin{subequations}
\label{eq:split-convergence-proof-1}
\begin{align}
\label{eq:split-convergence-proof-1a}
(\answer{\mathbb{A}}e_{\totalstress}^{i},\teststressT)
&+ {\tilde \alpha } ( e_p^{i},\tr\teststressT)
&+( e_\mathbf{u}^{i},\div \teststressT)
&+ (e_{\boldsymbol{\xi}}^{i} \cdot \chi,\teststressT)
&&=0
\\
\label{eq:split-convergence-proof-1b}
\tilde \alpha
\left(
\operatorname{tr} e_{\totalstress}^{i-1},\testpressureT
\right)&+
\answer{\tilde{c}_0}\left(e_p^{i},\testpressureT\right) - \beta \left(e_p^{i} - e_p^{i-1}, \testpressureT\right)&&&
+(\div e_\integratedvelocity^{i},\testpressureT)&=0
\\
\label{eq:split-convergence-proof-1c}
(\div e_{\totalstress}^{i},\testdisplacementT)&&&&&= 0
\\
\label{eq:split-convergence-proof-1d}
(e_{\totalstress}^{i},\testvorticityT\cdot \chi ) 
&&&&& = 0 
\\
\label{eq:split-convergence-proof-1e}
 &
\ \ \ ( e_p^{i},\div\testvelocityT)
&&&\answer{-\frac{1}{\Delta t}(\kappa^{-1}e_\integratedvelocity^{i},\testvelocityT)}
&=0  
\end{align}
\end{subequations}
for all $(\teststressT,\testpressureT,\testdisplacementT,\testvorticityT,\testvelocityT) \in {\bf X}_\tri$.
%
To eliminate all coupling terms in~\eqref{eq:split-convergence-proof-1}, 
we test~\eqref{eq:split-convergence-proof-1a} with $\teststressT = -e_{\totalstress}^{i-1}$,
~\eqref{eq:split-convergence-proof-1b} with $\testpressureT = e_p^i$,
~\eqref{eq:split-convergence-proof-1c} at iteration $i-1$ with $\testdisplacementT = e_{\displacement}^i$,
~\eqref{eq:split-convergence-proof-1d} at iteration $i-1$ with $\testvorticityT = e_{\vorticity}^i$, and~\eqref{eq:split-convergence-proof-1e} with $\testvelocityT = - e_{\integratedvelocity}^i$ and obtain for their sum
\begin{equation}
\label{eq:split-convergence-proof-1-summed}
\begin{aligned}
&-(\mathbb{A} e_{\totalstress}^{i},e_{\totalstress}^{i-1})
+ \tilde{c}_0 \left(e_p^{i},e_p^i\right)
+\frac{1}{\Delta t}(\kappa^{-1} e_\integratedvelocity^{i},e_{\integratedvelocity}^i)
- \beta \left(e_p^{i} - e_p^{i-1}, e_p^i\right)
=0.
\end{aligned}
\end{equation}
To simplify the expression, we reduce the flux term to a pressure term by employing a Poincar\'e inequality-type argument for mixed flow problems, cf., e.g.,~\cite{BothBorregalesNordbottenKumarRadu:2017} in the context of poromechanics, introducing a Poincar\'e constant $C_\mathrm{P}$. We obtain
\begin{align}
     \tilde{c}_0 (e_p^i, e_p^i) + \frac{1}{\Delta t}\left(\kappa^{-1} e_{\integratedvelocity}^i, e_{\integratedvelocity}^i\right) \geq 
     \left(\tilde c_0 + C_\mathrm{P}^2\underline{\kappa}\Delta t \right) \|e_p^i\|^2 
     =: \left(\frac{d\alpha^2}{2\mu+d\lambda} + C\right) \|e_p^i\|^2,
\end{align}
where for brevity we introduced the effective constant $C := C(\mu, \lambda, c_0, \underline{\kappa}, C_\mathrm{P}, \Delta t)$. We split and reorganize the pressure terms, such that in summary, it holds
\begin{equation}
\label{eq:split-convergence-proof-1-summed-post}
\begin{aligned}
&C\| e_p^{i}\|^2 
+ \left(\frac{d\alpha^2}{2\mu+d\lambda} - \beta \right) \left(e_p^{i} - e_p^{i-1}, e_p^i\right) 
+ \frac{d\alpha^2}{2\mu+d\lambda}  \left(e_p^{i}, e_p^{i-1}\right) 
- (\mathbb{A} e_{\totalstress}^{i},e_{\totalstress}^{i-1})
\leq 0.
\end{aligned}
\end{equation}
We aim to make use of the binomial and polarization identities 
\begin{align}
\label{eq:binomial-identity}
    (a-b)a &= \frac{1}{2}a^2 + \frac{1}{2} (a-b)^2 - \frac{1}{2}b^2, \qquad 
    ab 
    = \frac{1}{4}(a+b)^2 - \frac{1}{4}(a-b)^2 
\end{align}
for placeholders $a$ and $b$, which also extent to general bilinear forms.
Application to~\eqref{eq:split-convergence-proof-1-summed-post} yields
\begin{equation}
\label{eq:split-convergence-proof-with-algebraic-identities}
\begin{aligned}
&\left(C + \frac{1}{2} \left(\frac{d\alpha^2}{2\mu+d\lambda} - \beta \right)\right) \| e_p^{i}\|^2 
+ \frac{1}{4}\left(\frac{d\alpha^2}{2\mu+d\lambda} - 2\beta \right) \left\|e_p^{i} - e_p^{i-1} \right\|^2
+ \frac{1}{4}(\mathbb{A} e_{\totalstress}^{i} - e_{\totalstress}^{i-1}, e_{\totalstress}^{i} - e_{\totalstress}^{i-1})\\
&+ \frac{1}{4}\frac{d\alpha^2}{2\mu+d\lambda}  \left\| e_p^{i} +  e_p^{i-1}\right\|^2
-\frac{1}{4}(\mathbb{A} e_{\totalstress}^{i} + e_{\totalstress}^{i-1}, e_{\totalstress}^{i} + e_{\totalstress}^{i-1})\\
&\leq \frac{1}{2} \left(\frac{d\alpha^2}{2\mu+d\lambda} - \beta \right) \|e_p^{i-1}\|^2.
\end{aligned}
\end{equation}
From~\eqref{eq:split-convergence-proof-1a}, summed for iterations $i$ and $i-1$ and tested with $\teststressT = e_{\totalstress}^i + e_{\totalstress}^{i-1}$, utilizing~\eqref{eq:split-convergence-proof-1c}--\eqref{eq:split-convergence-proof-1c} to drop coupling terms, and the definition of $\tilde{\alpha}= \frac{\alpha}{2\mu+d\lambda}$, we obtain
\begin{align}
\label{eq:cauchy-schwarz}
(\mathbb{A} e_{\totalstress}^{i} + e_{\totalstress}^{i-1}, e_{\totalstress}^{i} + e_{\totalstress}^{i-1})
=
-\tilde{\alpha}\left(e_p^i + e_p^{i-1}, \tr (e_{\totalstress}^i + e_{\totalstress}^{i-1}) \right)
\leq 
\frac{\sqrt{d}\,\alpha}{\sqrt{2\mu+d\lambda}}\,\left\|e_p^i + e_p^{i-1}\right\|\, \left(\mathbb{A} e_{\totalstress}^i + e_{\totalstress}^{i-1}, e_{\totalstress}^i + e_{\totalstress}^{i-1}\right)^{1/2},
\end{align}
where the inequality follows from the Cauchy-Schwarz inequality and a standard (pointwise) AM-QM inequality
\begin{align}
\label{eq:discussion-term-2a}
    (\mathbb{A} \teststressT, \teststressT) 
    =
    \frac{1}{2\mu} \left(\teststressT, \teststressT \right) - \frac{1}{2\mu}\, \frac{\lambda}{2\mu + d\lambda} \left(\tr\teststressT, \tr\teststressT\right)
    \geq 
    \frac{1}{d} \, \frac{1}{2\mu + d\lambda} \left(\tr \teststressT, \tr \teststressT \right).
\end{align}
Finally, under the condition $\beta \leq \frac{1}{2} \frac{d\alpha^2}{2\mu+d\lambda}$, we can drop various non-negative terms in~\eqref{eq:split-convergence-proof-with-algebraic-identities} and it remains
\begin{align*}
\left(C + \frac{1}{2} \left(\frac{d\alpha^2}{2\mu+d\lambda} - \beta \right)\right) \| e_p^{i}\|^2 
\leq \frac{1}{2} \left(\frac{d\alpha^2}{2\mu+d\lambda} - \beta \right) \|e_p^{i-1}\|^2.
\end{align*}
This concludes the proof.
\end{proof}

\begin{remark}[Convergence of all five fields]
For $\beta \leq \frac{1}{2}\frac{d\alpha^2}{2\mu + d\lambda}$, an argument based on an inequality of the type~\eqref{eq:cauchy-schwarz} results in direct convergence of the stress error $e_{\totalstress}^i$. Inf-sup stability of the mechanics subproblem results in convergence of the errors $(e_{\displacement}^i,e_{\vorticity}^i)$, while the overall inf-sup stability of the fully-mixed formulation, cf. Section~\ref{sec:porousmediamixed}, yields convergence of the flux error $e_{\integratedvelocity}^i$.
\end{remark}

\begin{remark}[Theoretically vs.\ practically optimal tuning\label{remark:optimality}]
As discussed in other works on optimizing the stabilization~\cite{both2017numerical,storvik2019optimization,storvik2020fixed}, the practically optimal tuning depends on further factors including separation of boundaries into Dirichlet and Neumann boundaries as well as the overall physical character of the solution, stability constants as inf-sup constant etc. The theoretical optimum suggested by the theory should therefore be foremost understood as maximal destabilization with remaining guaranteed robustness.
\answer{Practically optimal tuning should be expected to lie between $\beta=0$ and $\beta = \frac{1}{2}\frac{d\alpha^2}{2\mu + d\lambda}$. This behavior is observed in the considered numerical test cases in the following section.}
\end{remark}

\begin{remark}[Extension to non-trivial boundary conditions\label{remark:bc}]
The splitting strategy and its theoretical analysis in Lemma~\ref{lemma:splitting} are independent of any chosen boundary conditions. As the analysis is essentially considering the error equations~\eqref{eq:split-convergence-proof-1}, all right hand sides (also those that would be introduced as surface integrals weakly encoding boundary conditions) cancel and all functions involved are error terms canceling any strongly encoded boundary conditions, reducing the discussions -- as above -- to function spaces with zero traces. Overall, the convergence rate and deduction of a tuned (de-)stabilization parameter holds in general.
\end{remark}

\section{Numerical results}\label{sec:numerics}

\answer{We consider in total four numerical examples in both two and three dimensions to verify our theory.} \answer{In 2D, two examples} are designed to have an analytical solution allowing for assessing the spatial approximation properties. The first example is using Example 8.1 in~\cite{ahmed2019adaptive} and is based on a manufactured solution, while the second example is the classical Mandel benchmark problem, cf., e.g.,~\cite{mikelic2014numerical}. By varying material parameters, the problem has either a loosely or tightly coupling which enables a fair assessment of the performance of iterative splitting methods, its dependence on material parameters and the potential of improving it through \answer{tuned} destabilization. \answer{In 3D, we consider a similar manufactured solution to discuss approximation quality and a footing problem to discuss splitting methods, both in similar way as in 2D.}

For the spatial discretization, we use Raviart--Thomas spaces of order \(k\) for the Darcy velocity and for each row of the total stress, discontinuous piecewise polynomials of degree \(k\) for the scalar pressure, discontinuous vector-valued piecewise polynomials of degree \(k\) for the displacement, and continuous piecewise polynomials of degree \(k\) for the rotation, with \(k\geq 1\).

All computations were carried out in Python using the finite element library \texttt{NGSolve}/\texttt{Netgen} for the spatial discretization and assembly of the discrete systems~\cite{ngsolve}. Auxiliary tasks, including array handling, sparse matrix operations, and root finding for the analytical Mandel solution, were implemented using \texttt{NumPy} and \texttt{SciPy}~\cite{numpy2020,scipy2020}. \answer{All linear problems are solved using direct solvers, where in 2D SuperLU factorization is employed, and in 3D \texttt{PETSc}~\cite{petsc-web-page,petsc4py} is used with the multifrontal solver \texttt{MUMPS}~\cite{MUMPS2001,MUMPS2006} under a nested-dissection ordering.}

\subsection{Manufactured solution on the unit square}\label{subsec:manufactured}
We consider on the unit square $\Omega = [0,1]^2$ the manufactured polynomial solution
\begin{equation}
\mathbf{u}(\mathbf{x}, t) :=
\begin{pmatrix}
t x(1-x)y(1-y) \\
t y(1-y)x(1-x)
\end{pmatrix},
\qquad
p(\mathbf{x}, t) := t x(1-x)y(1-y),
\qquad 0 \le t \le 1.
\label{eq:manufactured-solution}
\end{equation}
The source terms, initial conditions, and Dirichlet boundary conditions are chosen such that \eqref{eq:manufactured-solution} is the exact solution of the continuous Biot system in strong form, cf.\ section~\ref{sec:porousmedia}.

To systematically investigate the effect of the coupling strength and assess the robustness with respect to key parameters (related to compressibility and permeability), we reduce the overall parameter dependence to two positive scaling parameters \answer{$\gamma_1$ and $\gamma_2$}. More precisely, we choose
\[
\answer{\kappa = \frac{\gamma_1}{\gamma_2}},
\qquad
c_0 = \gamma_1,
\qquad
\alpha = 1,
\qquad
\mu = 0.6,
\qquad
\lambda = 0.6\,\gamma_2.
\]
Here, $\gamma_1$ mainly scales the storage coefficient and the permeability, \answer{while 
$\gamma_2$ controls the compressibility and permeability of the solid skeleton}. Together they control the critical coupling strength between flow and mechanics, \answer{as defined in~\cite{kim2011stability} and occurring in the convergence rate in Lemma~\ref{lemma:splitting}},

\answer{\[\tau := \frac{d\alpha^2}{c_0\,(2\mu+d\lambda)}= \frac{\alpha^2}{c_0 K_\mathrm{dr}}\quad\text{for}\quad K_\mathrm{dr}:= \frac{2\mu}{d}+\lambda.\]}

Small values of $\tau < 1$ correspond to weak coupling, whereas large values of $\tau\geq 1$ correspond to strongly coupled regimes. In the experiments below, we vary
\[
\gamma_1 \in \{10^{-3},10^{-2},10^{-1},1,10\},
\qquad
\gamma_2 \in \{10^{-2},10^{-1},1,10,10^2,10^3,10^4\},
\]
and we consider polynomial orders $k=1,2,3$. This allows us to examine both the approximation properties of the method and the dependence of the iterative solver on the discretization order.

For the iterative solver, we compare two splitting schemes \answer{by varying the stabilization parameter $\beta$}, namely the classical fixed-stress split ($\beta=\beta_\mathrm{FS}:=0$) and the \answer{maximally destabilized} fixed-stress split \answer{($\beta = \beta_\mathrm{max} := \tfrac{1}{2}M$)} suggested by the theory, cf. Lemma~\ref{lemma:splitting}. At each time step, the splitting iteration is terminated once the relative change between two successive iterates is below a prescribed tolerance for both subproblems. More precisely, denoting by
\[
X_{\mathrm f}^m := (\velocity_h^m,p_h^m),
\qquad
X_{\mathrm m}^m := (\totalstress_h^m,\displacement_h^m,\vorticity_h^m)
\]
the flow and mechanics iterates at iteration $m$, respectively, we stop as soon as 

\begin{equation}
\frac{\|X_{\mathrm f}^{m}-X_{\mathrm f}^{m-1}\|_2}{\|X_{\mathrm f}^{m-1}\|_2+\varepsilon_0}
\le \texttt{tol}
\qquad\text{and}\qquad
\frac{\|X_{\mathrm m}^{m}-X_{\mathrm m}^{m-1}\|_2}{\|X_{\mathrm m}^{m-1}\|_2+\varepsilon_0}
\le \texttt{tol},
\label{eq:stopping-criterion}
\end{equation}
where \(\texttt{tol}=10^{-6}\) and \(\varepsilon_0=10^{-14}\) is a small safeguard parameter. Thus, \eqref{eq:stopping-criterion} measures the relative update between successive fixed-point iterates rather than the algebraic residual of the monolithic coupled system.

For all computations, the unit square is discretized by a structured triangulation obtained from an \(N\times N\) subdivision, so that the mesh size satisfies \(h\sim N^{-1}\). In the iteration-count experiments, we keep the spatial and temporal discretizations fixed and choose \(N=16\) and \(\Delta t=1/4\) on the time interval \([0,1]\). In the convergence study, we use the same time interval and time step, but successively refine the mesh and evaluate the spatial approximation errors at the final time \(t=1\).

We first examine the spatial approximation behavior for the reference choice $\gamma_1=\gamma_2=1$ without tuning, that is, with $\beta=\beta_\mathrm{FS}$. The convergence results are displayed in Figure~\ref{fig:analytical_convergencerates}. 

\begin{figure}[!ht]
    \centering    
    \begin{subfigure}[b]{0.49\textwidth}
        \centering
        \includegraphics[width=\textwidth]{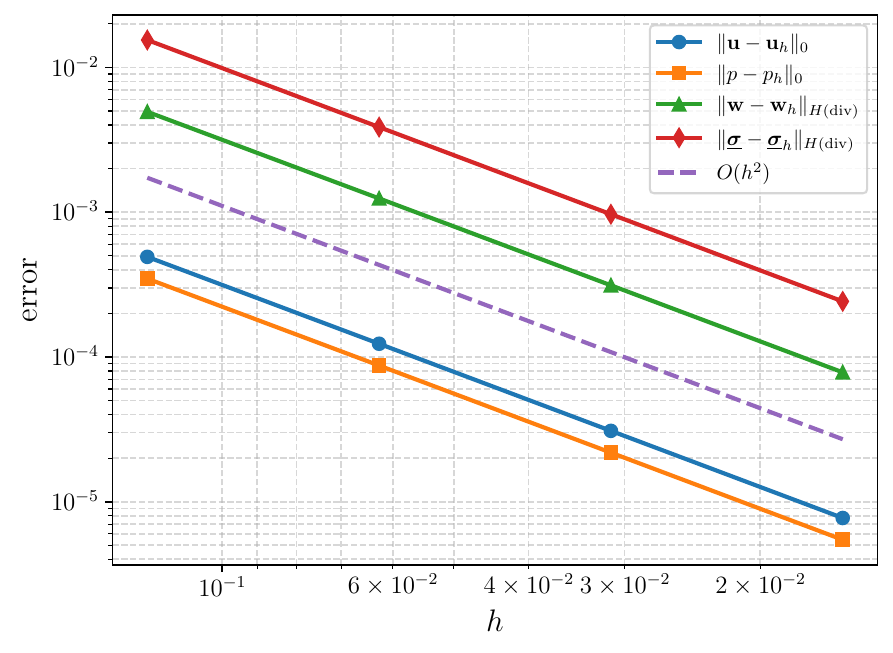} 
        \caption{$k=1$}
    \end{subfigure}
    \hfill    
    \begin{subfigure}[b]{0.49\textwidth}
        \centering
        \includegraphics[width=\textwidth]{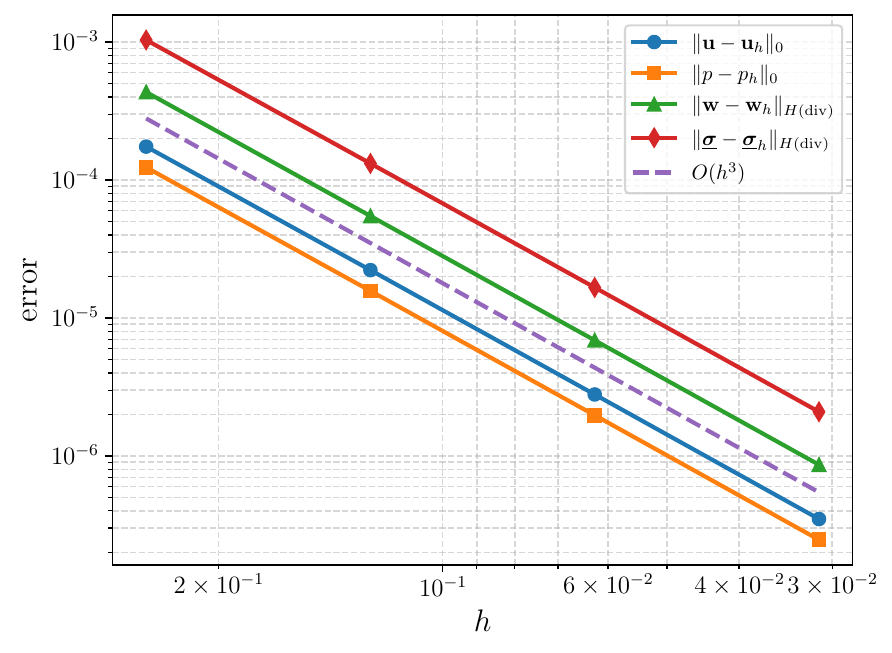}
        \caption{$k=2$}
    \end{subfigure}
    \vfill
        \begin{subfigure}[b]{0.49\textwidth}
        \centering
        \includegraphics[width=\textwidth]{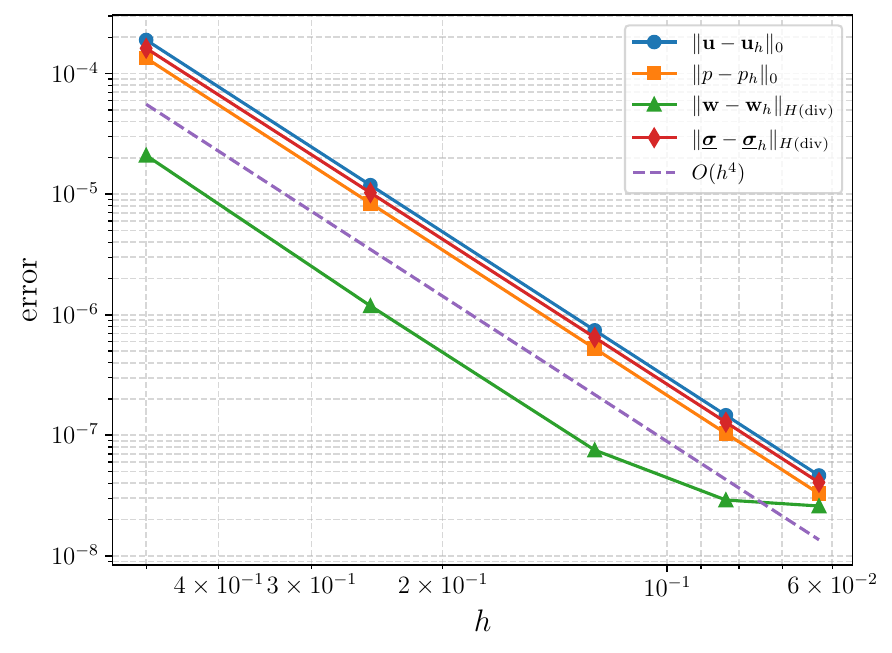} 
        \caption{$k=3$}
    \end{subfigure}
    \hfill    
    \begin{subfigure}[b]{0.49\textwidth}
        \centering
        \includegraphics[width=\textwidth]{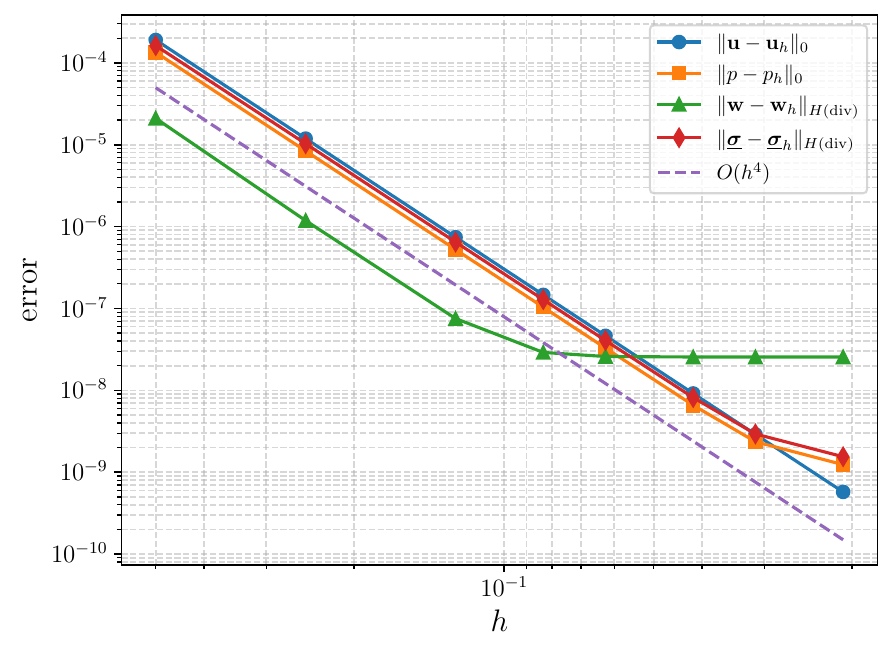}
        \caption{$k=3$, further mesh refinement}
    \end{subfigure}
   \caption{Spatial convergence for the manufactured solution with $\gamma_1=\gamma_2=1$ and $\beta=0$. The errors are evaluated at the final time $t=1$ for discrete space orders $k=1,2,3$.}
    \label{fig:analytical_convergencerates}
\end{figure}

\par
For $k=1$ and $k=2$, the observed decay is consistent with approximately $\mathcal{O}(h^{k+1})$ for all primary variables. For $k=3$, the same convergence behavior can be observed on the coarser mesh sequence. For finer meshes, some of the error curves no longer decrease with the expected rate. This is due to other error contributions, such as the fixed stopping tolerance in the splitting iteration and floating-point effects, and not to a loss of consistency of the discretization; ideally, tolerances for stopping criteria are correlated with discretization errors~\cite{jiranek2010posteriori}. Overall, the figure confirms the expected optimal convergence behavior of the mixed method for displacement, pressure, velocity, and total stress.

We next turn to the iterative behavior of the splitting scheme and report the average number of iterations per time step required to satisfy \eqref{eq:stopping-criterion}. Table~\ref{tab:avg_iter_gamma_scaling2_1} shows the results obtained by varying $\gamma_1$ while keeping $\gamma_2=1$. Since in this case decreasing $\gamma_1$ decreases both the storage coefficient $c_0$ and the permeability scale, the coupling strength $\tau$ increases substantially. The iteration counts reflect this trend clearly: the problem moves from a weakly coupled regime, where both schemes converge in only a few iterations, to a strongly coupled regime, where the plain splitting ($\beta=\beta_0$) becomes significantly more expensive. For this case, the benefit of tuning becomes more pronounced as the coupling becomes stronger. In particular, for $\gamma_1=10^{-3}$ the average number of iterations is reduced from about $180$ to about $96$ by the tuned scheme ($\beta=\beta_\mathrm{max}$). 

A complementary picture is obtained by varying $\gamma_2$ while keeping $\gamma_1=1$, see Table~\ref{tab:avg_iter_scaling2_gamma1}. Here, increasing $\gamma_2$ increases the Lam\'e parameter $\lambda$, and therefore drives the mechanics toward the nearly incompressible regime, with Poisson ratio approaching $1/2$. At the same time, since $\kappa=\gamma_1/\gamma_2$, large values of $\gamma_2$ also decrease the permeability. In terms of the coupling strength $\tau$, larger $\gamma_2$ correspond to weaker coupling, and the iteration counts indeed become very small in this regime. For smaller values of $\gamma_2$, the coupling strength increases toward and beyond $\tau\approx 1$, and the tuned scheme ($\beta=\beta_\mathrm{max}$) again yields a systematic reduction of the iteration counts.

At the same time, the large-$\gamma_2$ regime should be interpreted with some care. The manufactured solution \eqref{eq:manufactured-solution} remains an exact solution for all parameter choices by construction, so the test is mathematically fully valid throughout. However, for very large $\gamma_2$ it becomes less representative of the characteristic behavior of a genuinely nearly incompressible poroelastic response, since the spatial and temporal profiles of $\velocity$ and $p$ are fixed a priori while the material coefficients are varied over several orders of magnitude. In that sense, the example is best viewed as a controlled benchmark for discretization and iteration rather than as a physically realistic model in the extreme nearly incompressible limit.

This observation is also relevant when interpreting the velocity errors. In the large-$\gamma_2$ regime, the permeability is very small, so the Darcy velocity itself becomes small in magnitude, while its divergence must still balance the mass conservation constraint. In such a setting, accurate control of the $H(\div)$-error of the velocity can become more delicate, especially for a manufactured solution that is not specifically tailored to this singularly perturbed regime. Thus, a deterioration in the $H(\div)$-velocity error for very large $\gamma_2$ should not necessarily be interpreted as a structural deficiency of the method, but rather as a consequence of combining an extreme parameter regime with a fixed manufactured solution.

\begin{table}[ht!]
\renewcommand{\arraystretch}{1.15}
\centering
\begin{tabular}{cccccccc}
\toprule
$\gamma_1$ & coupling strength $\tau$ & \multicolumn{2}{c}{$k=1$} & \multicolumn{2}{c}{$k=2$} & \multicolumn{2}{c}{$k=3$} \\
& & $\beta_\mathrm{FS}$ & $\beta_\mathrm{max}$ & $\beta_\mathrm{FS}$ & $\beta_\mathrm{max}$ & $\beta_\mathrm{FS}$ & $\beta_\mathrm{max}$\\
\midrule
10 & $8.333 \times 10^{-2}$ & 4.25 & 4.00 & 4.25 & 4.00 & 4.25 & 4.00 \\
1 & $8.333 \times 10^{-1}$ & 7.25 & 6.00 & 7.25 & 6.00 & 7.25 & 6.00 \\
0.1 & $8.333 \times 10^{0}$ & 18.75 & 11.50 & 18.75 & 11.50 & 18.75 & 11.50 \\
0.01 & $8.333 \times 10^{1}$ & 60.50 & 33.00 & 60.50 & 33.00 & 60.50 & 33.00 \\
0.001 & $8.333 \times 10^{2}$ & 182.50 & 98.50 & 178.50 & 96.25 & 178.50 & 96.25 \\
\bottomrule
\end{tabular}
\caption{Average number of iterations per time step for varying $\gamma_1$ with fixed $\gamma_2=1$.}
\label{tab:avg_iter_gamma_scaling2_1}
\end{table}

\begin{table}[ht!]
\renewcommand{\arraystretch}{1.15}
\centering
\begin{tabular}{ccccccccc}
\toprule
$\gamma_2$ & Poisson ratio & coupling strength $\tau$ & \multicolumn{2}{c}{$k=1$} & \multicolumn{2}{c}{$k=2$} & \multicolumn{2}{c}{$k=3$} \\
& & & $\beta_\mathrm{FS}$ & $\beta_\mathrm{max}$ & $\beta_\mathrm{FS}$ & $\beta_\mathrm{max}$ & $\beta_\mathrm{FS}$ & $\beta_\mathrm{max}$\\
\midrule
10000 & 0.49995 & $1.667 \times 10^{-4}$ & 3.00 & 4.00 & 3.00 & 4.00 & 3.00 & 4.00 \\
1000 & 0.4995 & $1.665 \times 10^{-3}$ & 4.00 & 4.50 & 4.00 & 4.50 & 4.00 & 4.50 \\
100 & 0.49505 & $1.650 \times 10^{-2}$ & 4.50 & 5.25 & 4.50 & 5.25 & 4.50 & 5.25 \\
10 & 0.45455 & $1.515 \times 10^{-1}$ & 6.25 & 6.00 & 6.25 & 6.00 & 6.25 & 6.00 \\
1 & 0.25 & $8.333 \times 10^{-1}$ & 7.25 & 6.00 & 7.25 & 6.00 & 7.25 & 6.00 \\
0.1 & 0.04545 & $1.515 \times 10^{0}$ & 5.00 & 4.25 & 5.00 & 4.25 & 5.00 & 4.25 \\
0.01 & 0.00495 & $1.650 \times 10^{0}$ & 4.00 & 3.50 & 4.00 & 3.50 & 4.00 & 3.50 \\
\bottomrule
\end{tabular}
\caption{Average number of iterations per time step for varying $\gamma_2$ with fixed $\gamma_1=1$.}
\label{tab:avg_iter_scaling2_gamma1}
\end{table}

A further important observation in both Tables~\ref{tab:avg_iter_gamma_scaling2_1} and \ref{tab:avg_iter_scaling2_gamma1} is that the iteration counts are nearly identical for $k=1,2,3$. Hence, for this example, the convergence of the splitting iteration is governed primarily by the physical parameters and the coupling strength, whereas the polynomial degree mainly affects the spatial accuracy of the discrete solution. This separation is favorable in practice: one may increase the approximation order to improve accuracy without significantly affecting the convergence of the iterative coupling scheme.

In summary, the experiments confirm two main points. First, the proposed discretization exhibits the expected spatial convergence behavior for all primary variables. Second, a non-zero tuning parameter in the splitting scheme is particularly beneficial in strongly coupled regimes, where it leads to a substantial reduction in iteration counts, while in weakly coupled regimes both variants perform comparably. Moreover, no significant difference across discretizations is encountered in line with the convergence result in Lemma~\ref{lemma:splitting}, which is agnostic to the spatial discretization.

\subsection{Mandel's problem}\label{sec:mandel}
Mandel’s problem is a classical benchmark in poroelasticity for assessing how accurately a numerical method captures the coupling between elastic deformation and pore-pressure diffusion. It is particularly well known for the non-monotonic evolution of the pore pressure near the center of the specimen. In this study, we consider a rectangular fluid-saturated poroelastic sample of width $2a$ and height $2b$, subjected to an instantaneous compressive load through rigid plates attached to the top and bottom boundaries. The lateral boundaries are drained, and gravitational effects are neglected. Owing to the availability of exact analytical expressions for the pressure, displacement, Darcy velocity, and stress fields, this benchmark is widely used for the validation of coupled hydro-mechanical models.

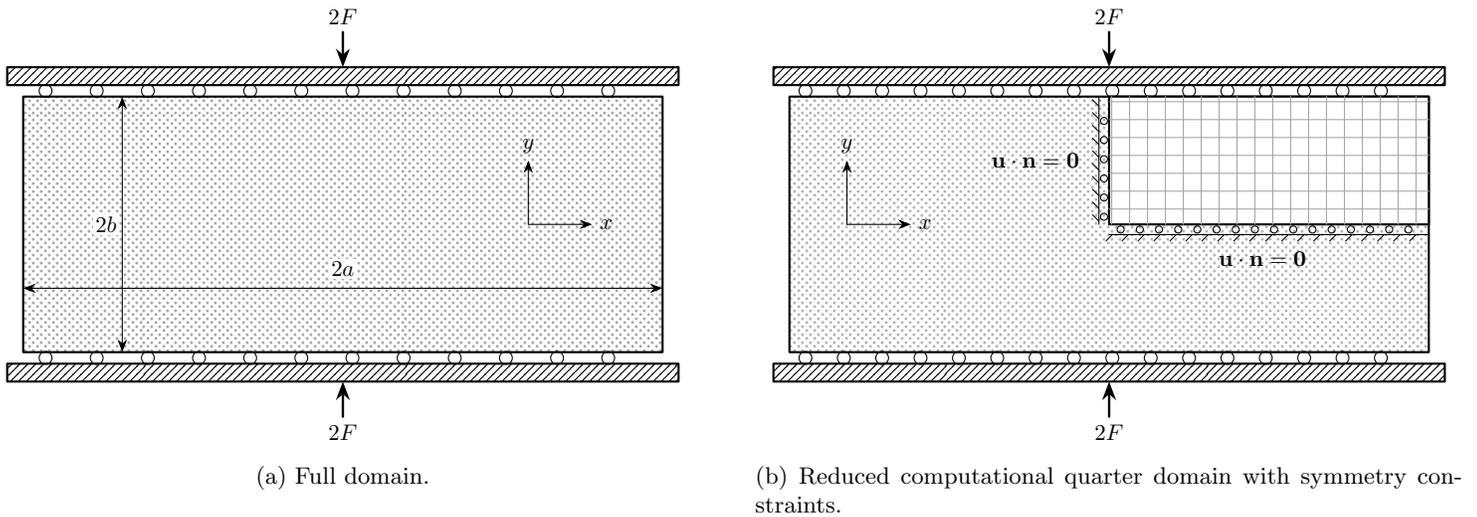
\begin{figure}[!ht]
\centering
\begin{subfigure}[t]{0.48\textwidth}
\centering
\begin{tikzpicture}[scale=0.85, transform shape, >=Stealth, line cap=round, line join=round]

\def\L{10.0}      
\def\H{4.0}       
\def\plategap{0.18}
\def\platethick{0.28}
\def\r{0.10}      

\fill[pattern=crosshatch dots, pattern color=gray!70] (0,0) rectangle (\L,\H);
\draw[thick] (0,0) rectangle (\L,\H);

\fill[pattern=north east lines] (-0.25,\H+\plategap) rectangle (\L+0.25,\H+\plategap+\platethick);
\draw[thick] (-0.25,\H+\plategap) rectangle (\L+0.25,\H+\plategap+\platethick);

\fill[pattern=north east lines] (-0.25,-\plategap-\platethick) rectangle (\L+0.25,-\plategap);
\draw[thick] (-0.25,-\plategap-\platethick) rectangle (\L+0.25,-\plategap);

\foreach \x in {0.35,1.15,...,9.65}{
    \draw (\x,\H+\plategap/2) circle (\r);
    \draw (\x,-\plategap/2) circle (\r);
}

\draw[->,line width=0.9pt] (\L/2,\H+\plategap+\platethick+0.55) -- (\L/2,\H + \platethick + \plategap);
\node[above] at (\L/2,\H+\plategap+\platethick+0.55) {$2F$};

\draw[->,line width=0.9pt] (\L/2,-\plategap-\platethick-0.55) -- (\L/2,-\plategap - \platethick);
\node[below] at (\L/2,-\plategap-\platethick-0.55) {$2F$};

\draw[<->,thin] (0,\H/4) -- (\L,\H/4);
\node[fill=white,inner sep=1pt] at (\L/2,\H/4 + 0.3) {$2a$};

\draw[<->,thin] (1.55,0) -- (1.55,\H);
\node[fill=white,inner sep=1pt,left] at (1.5,\H/2) {$2b$};

\draw[->] (7.9,2.0) -- (8.9,2.0);
\draw[->] (7.9,2.0) -- (7.9,3.0);
\node[right] at (8.9,2.0) {$x$};
\node[above] at (7.9,3.0) {$y$};

\end{tikzpicture}
\caption{Full domain.}
\label{fig:mandel_full_domain}
\end{subfigure}
\hfill
\begin{subfigure}[t]{0.48\textwidth}
\centering

\begin{tikzpicture}[scale=0.85, transform shape, >=Stealth, line cap=round, line join=round]

\def\L{10.0}         
\def\H{4.0}          
\def\plategap{0.18}
\def\platethick{0.28}
\def\r{0.10}

\def\xq{5.0}         
\def\yq{2.0}         

\fill[pattern=crosshatch dots, pattern color=gray!60] (0,0) rectangle (\L,\H);
\draw[thick] (0,0) rectangle (\L,\H);

\fill[pattern=north east lines] (-0.25,\H+\plategap) rectangle (\L+0.25,\H+\plategap+\platethick);
\draw[thick] (-0.25,\H+\plategap) rectangle (\L+0.25,\H+\plategap+\platethick);

\fill[pattern=north east lines] (-0.25,-\plategap-\platethick) rectangle (\L+0.25,-\plategap);
\draw[thick] (-0.25,-\plategap-\platethick) rectangle (\L+0.25,-\plategap);

\foreach \x in {0.25,0.85,...,9.85}{
    \draw (\x,\H+\plategap/2) circle (\r);
    \draw (\x,-\plategap/2) circle (\r);
}

\draw[->,line width=0.9pt] (\L/2,\H+\plategap+\platethick+0.55) -- (\L/2,\H+\plategap+\platethick);
\node[above] at (\L/2,\H+\plategap+\platethick+0.55) {$2F$};

\draw[->,line width=0.9pt] (\L/2,-\plategap-\platethick-0.55) -- (\L/2,-\plategap-\platethick);
\node[below] at (\L/2,-\plategap-\platethick-0.55) {$2F$};

\def\xq{5.0}
\def\yq{2.0}

\def\qgap{0.16}   
\def\qr{0.055}    

\fill[white] (\xq,\yq) rectangle (\L,\H);
\draw[thick] (\xq,\yq) rectangle (\L,\H);
\draw[step=0.28,thin,gray!70] (\xq,\yq) grid (\L,\H);

\draw[thin] (\xq-\qgap,\yq) -- (\xq-\qgap,\H);

\foreach \yy in {2.05,2.25,...,3.85}{
    \draw[thin] (\xq-\qgap,\yy) -- (\xq-\qgap-0.10,\yy+0.10);
}

\foreach \yy in {2.12,2.42,...,3.82}{
    \draw (\xq-\qgap/2,\yy) circle (\qr);
}

\draw[thin] (\xq,\yq-\qgap) -- (\L,\yq-\qgap);

\foreach \xx in {5.05,5.30,...,9.90}{
    \draw[thin] (\xx,\yq-\qgap) -- (\xx-0.10,\yq-\qgap-0.10);
}

\foreach \xx in {5.18,5.48,...,9.78}{
    \draw (\xx,\yq-\qgap/2) circle (\qr);
}

\node[left] at (\xq-\qgap-0.18,3.0) {$\displacement \cdot \bf n=0$};
\node[below] at (7.4,\yq-\qgap-0.14) {$\displacement \cdot \bf n=0$};

\draw[->] (0.9,2.0) -- (1.9,2.0);
\draw[->] (0.9,2.0) -- (0.9,3.0);
\node[right] at (1.9,2.0) {$x$};
\node[above] at (0.9,3.0) {$y$};
\end{tikzpicture}
\caption{Reduced computational quarter domain with symmetry constraints.}
\label{fig:mandel_reduced_domain}
\end{subfigure}

\caption{Schematic illustration of Mandel's problem and the reduced computational quarter domain used in the simulations.}
\label{fig:mandel_setup}
\end{figure}

\par 
We consider the classical Mandel benchmark problem; see, for example, \cite{mikelic2014numerical}. Owing to symmetry, the problem is solved on the quarter domain $\Omega=(0,a)\times(0,b)$. In our computations, we prescribe the following initial conditions:

\begin{equation}\label{eq:mandel_ic}
\begin{aligned}
p(x,0)
&=
\frac{FB(1+\nu_u)}{3a},
&& x\in (0,a),\\[0.5ex]
\totalstress_{11}(x,0) &= 
\totalstress_{12}(x,0) = \totalstress_{21}(x,0) = 0,
&& x\in (0,a),\\[0.5ex]
\totalstress_{22}(x,0)
&=
-\frac{F}{a},
&& x\in (0,a),
\end{aligned}
\end{equation}
and the boundary conditions 
\begin{equation}\label{eq:mandelbc}
\begin{alignedat}{4}
p &= 0, 
&\qquad & 
&\qquad \answer{\totalstress \bf n} &= 0, 
&\qquad &\text{on } x=a, \\
\velocity \cdot \bf n &= 0, 
&\qquad \displacement \cdot \bf n &= \displacement_{2}^{\mathrm{analytical}}(b,t), 
&\qquad \totalstress_{12} &= 0, 
&\qquad &\text{on } y=b, \\
\velocity \cdot \bf n &= 0, 
&\qquad \displacement \cdot \bf n &= 0, 
&\qquad \totalstress_{21} &= 0, 
&\qquad &\text{on } x=0, \\
\velocity \cdot \bf n &= 0, 
&\qquad \displacement \cdot \bf n &= 0, 
&\qquad \totalstress_{12} &= 0, 
&\qquad &\text{on } y=0.
\end{alignedat}
\end{equation}

\par
This benchmark admits an exact series solution satisfying the above conditions; see \cite{abousleiman1996mandel}. In the notation adopted in this paper, the exact pressure and displacement are given by
\begin{align}
p(x,t)
&=
\frac{2FB(1+\nu_u)}{3a}
\sum_{n=1}^{\infty}
\frac{\sin(\alpha_n)}
{\alpha_n-\sin(\alpha_n)\cos(\alpha_n)}
\left(
\cos\!\left(\frac{\alpha_n x}{a}\right)-\cos(\alpha_n)
\right)
e^{-\frac{\alpha_n^2 c t}{a^2}}, \nonumber
\\[1ex]
\displacement_1(x,t)
&=
\left[
\frac{F\nu}{2\mu a}
-
\frac{F\nu_u}{\mu a}
\sum_{n=1}^{\infty}
\frac{\sin(\alpha_n)\cos(\alpha_n)}
{\alpha_n-\sin(\alpha_n)\cos(\alpha_n)}
e^{-\frac{\alpha_n^2 c t}{a^2}}
\right] x
+
\frac{F}{\mu}
\sum_{n=1}^{\infty}
\frac{\cos(\alpha_n)}
{\alpha_n-\sin(\alpha_n)\cos(\alpha_n)}
\sin\!\left(\frac{\alpha_n x}{a}\right)
e^{-\frac{\alpha_n^2 c t}{a^2}}, \nonumber
\\[1ex]
\displacement_2(y,t)
&=
\left[
-\frac{F(1-\nu)}{2\mu a}
+
\frac{F(1-\nu_u)}{\mu a}
\sum_{n=1}^{\infty}
\frac{\sin(\alpha_n)\cos(\alpha_n)}
{\alpha_n-\sin(\alpha_n)\cos(\alpha_n)}
e^{-\frac{\alpha_n^2 c t}{a^2}}
\right] y, \nonumber
\end{align}
while the analytical solutions for the total stress and Darcy velocity read
\begin{align}
\totalstress_{11} &= \totalstress_{12} = \totalstress_{21} = 0, \nonumber
\\[1ex]
\totalstress_{22}(x,t)
&=
-\frac{F}{a}
-\frac{2F}{a}
\sum_{n=1}^{\infty}
\frac{\sin(\alpha_n)}
{\alpha_n-\sin(\alpha_n)\cos(\alpha_n)}
\left[
\frac{\nu_u-\nu}{1-\nu}\,
\cos\!\left(\frac{\alpha_n x}{a}\right)
-\cos(\alpha_n)
\right]
e^{-\frac{\alpha_n^2 c t}{a^2}}, \nonumber
\\[1ex]
\velocity_1(x,t)
&=
\frac{2FB(1+\nu_u)k}{3a^2\eta}
\sum_{n=1}^{\infty}
\frac{\alpha_n\sin(\alpha_n)}
{\alpha_n-\sin(\alpha_n)\cos(\alpha_n)}
\sin\!\left(\frac{\alpha_n x}{a}\right)
e^{-\frac{\alpha_n^2 c t}{a^2}}, \nonumber
\\
\velocity_2(x,t) &= 0. \nonumber
\end{align}
Here, $\alpha_n$ denotes the positive roots of the transcendental equation
\begin{equation}
\tan(\alpha_n)
=
\frac{1-\nu}{\nu_u-\nu}\,\alpha_n,
\qquad n\in\mathbb{N}.
\label{eq:mandel_roots}
\end{equation}
The quantities $\nu$, $F$, $B$, $c$, and $a$ denote the benchmark parameters appearing in the closed-form solution. In our computations, their values are determined from the physical Biot parameters listed in Table~\ref{tab:mandel_parameters}.

\begin{table}[!ht] 
\renewcommand{\arraystretch}{1.15}
\centering
\caption{Input parameters for Mandel's problem}
\label{tab:mandel_parameters}
\begin{tabular}{lll}
\toprule
Symbol & Quantity & Value \\
\midrule
$a$ & Dimension in $x$ & $100\ \mathrm{m}$ \\
$b$ & Dimension in $y$ & $10\ \mathrm{m}$ \\
$E$ & Young's modulus & $5.94 \times 10^{9}\ \mathrm{Pa}$ \\
$\nu$ & Poisson's ratio & $0.2$ \\
$c_f$ & Fluid compressibility & $3.03 \times 10^{-10}\ /\mathrm{Pa}$ \\
$F$ & Applied load & $6.0 \times 10^{8}\ \mathrm{N\,m^{-1}}$ \\
$\alpha$ & Biot's constant & $1.0$ \\
$k$ & Permeability & $100\ \mathrm{mD}$ \\
$\phi$ & Initial porosity & $0.2$ \\
$\eta$ & Fluid viscosity & $1.0\ \mathrm{cP}$ \\
$\Delta x$ & Grid spacing in $x$ & $2.5\ \mathrm{m}$ \\
$\Delta y$ & Grid spacing in $y$ & $0   .25\ \mathrm{m}$ \\
$\Delta t$ & Time step size & $10\ \mathrm{s}$ \\
$t_T$ & Total simulation time & $50{,}000\ \mathrm{s}$ \\
$B$ & Skempton coefficient & $0.83333$ \\
$\nu_u$ & Undrained Poisson's ratio & $0.44$ \\
\answer{$M_{\mathrm b}$} & Biot's modulus & $1.65 \times 10^{10}\ \mathrm{Pa}$ \\
$c$ & Diffusivity coefficient & $0.465\ \mathrm{m}^2\ \mathrm{s}^{-1}$ \\
\bottomrule
\end{tabular}
\end{table}

\begin{figure}[!ht]
    \centering    
    \begin{subfigure}[b]{0.32\textwidth}
        \centering
        \includegraphics[width=\textwidth]{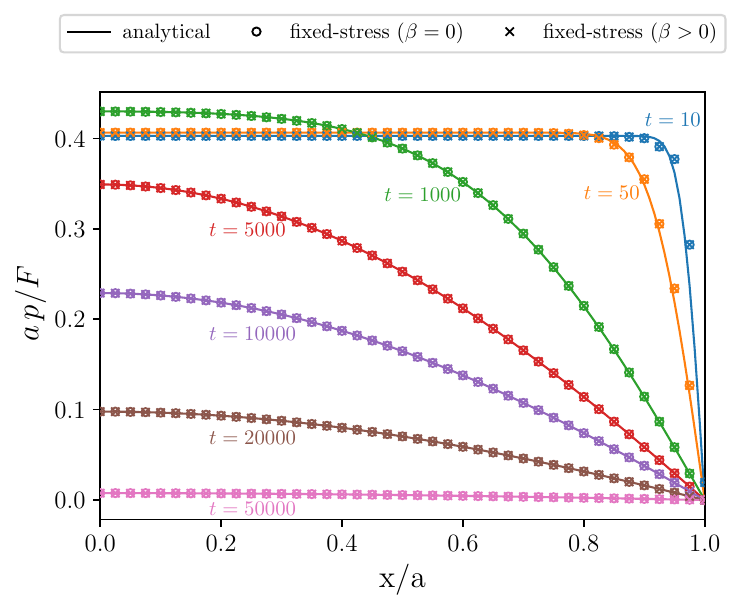} 
        \caption{Dimensionless pressure}
    \end{subfigure}
    \hfill    
    \begin{subfigure}[b]{0.32\textwidth}
        \centering
        \includegraphics[width=\textwidth]{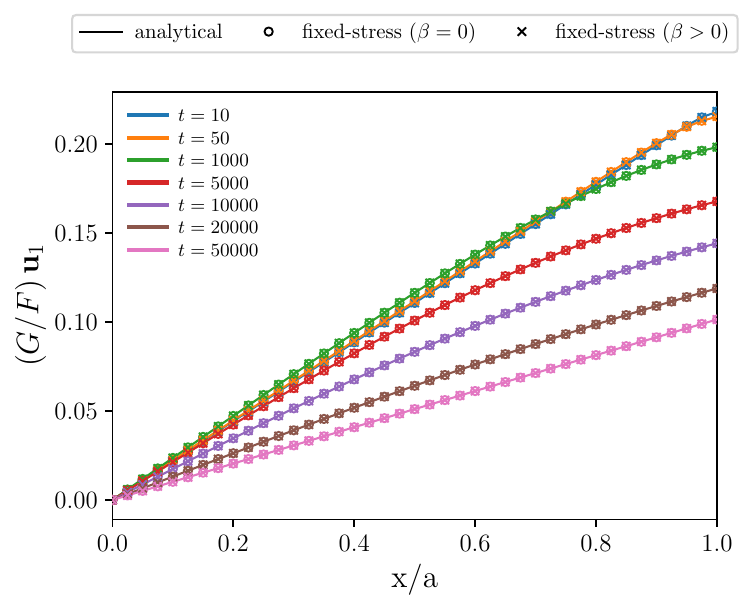}
        \caption{Dimensionless horizontal displacement}
    \end{subfigure}
    \hfill
        \begin{subfigure}[b]{0.32\textwidth}
        \centering
        \includegraphics[width=\textwidth]{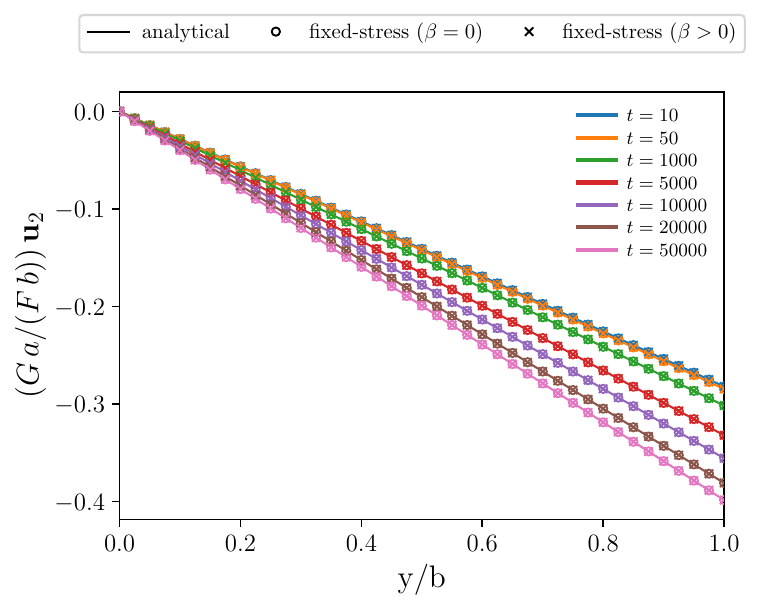} 
        \caption{Dimensionless vertical displacement}
    \end{subfigure}
    \vfill    
    \begin{subfigure}[b]{0.32\textwidth}
        \centering
        \includegraphics[width=\textwidth]{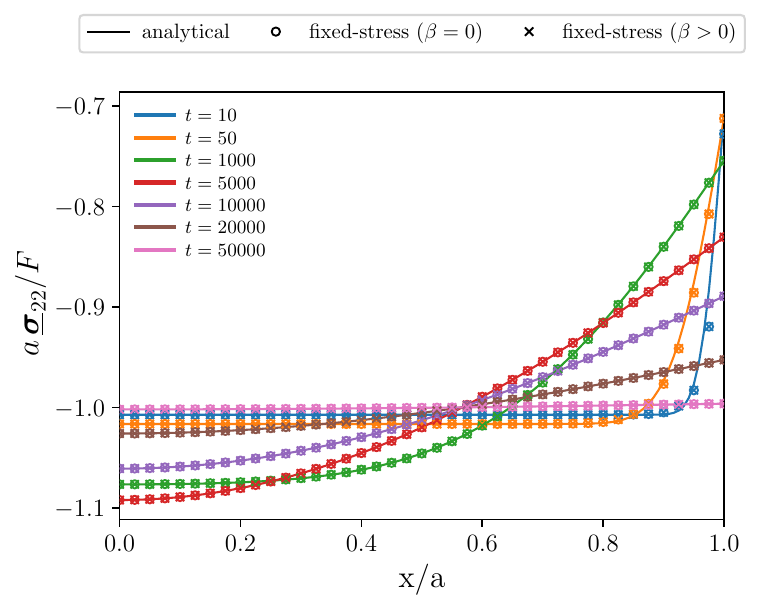}
        \caption{Dimensionless vertical total stress}
    \end{subfigure}
        \begin{subfigure}[b]{0.32\textwidth}
        \centering
        \includegraphics[width=\textwidth]{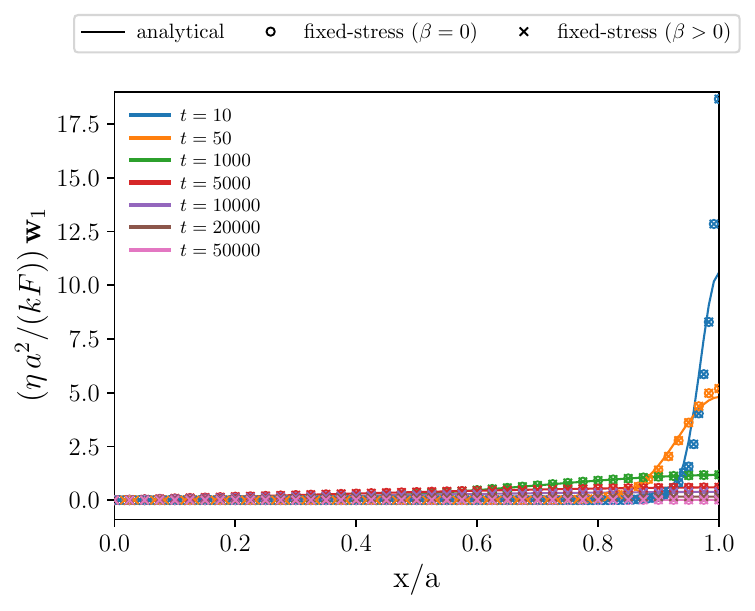} 
        \caption{Dimensionless horizontal velocity}
    \end{subfigure}
\caption{Dimensionless solution variables for Mandel's problem.}    \label{fig:mandelsolvar}
    
\end{figure}

For the numerical simulation of Mandel's problem shown in Figure~\ref{fig:mandelsolvar}, we use the parameter set listed in Table~\ref{tab:mandel_parameters}, the lowest discrete space order choice \(k=1\), a structured anisotropic mesh with \(\Delta x=2.5\) and \(\Delta y=0.25\), and a uniform time step \(\Delta t=10\) on the interval \([0,50000]\). \answer{For a more nuanced discussion in light of Remark~\ref{remark:optimality} on practical optimality, we resolve the admissible destabilization range by sweeping the fraction $\theta:=\beta/\beta_\mathrm{max}\in\left\{0,\tfrac14,\tfrac12,\tfrac34,1\right\}$. With this,  $\theta=0$ is the plain fixed-stress split and $\theta=1$ the choice $\beta=\beta_\mathrm{max}$ of Lemma~\ref{lemma:splitting}, which is the maximal destabilization with guaranteed robustness.}

\answer{This benchmark supports the statements made in Remark~\ref{remark:optimality}: the theoretically optimized destabilization need not be practically optimal. For the reference run of Figure~\ref{fig:mandelsolvar}, averaged over the full time interval $[0,50000]$, the theoretical optimum $\theta=1$ barely changes the count relative to the plain split, requiring $6.0072$ against $6.0402$ iterations per time step on average, whereas the intermediate choice $\theta=\tfrac34$ halves it to $3.0$. Thus theoretical tuning does not necessarily lead to improved performance in practice, while a moderate destabilization does. This should be interpreted in light of the fact that Mandel's problem, despite being posed in two dimensions, exhibits a strongly one-dimensional character, and the optimal tuning depends on factors not captured by the theory. The systematic dependence on $\theta$ across coupling strengths and polynomial orders is quantified in Table~\ref{tab:mandel_avg_iter_gamma3} below, over a shorter interval concentrated on the coupled transient.}

Figure~\ref{fig:mandelsolvar} shows the numerical and analytical solutions for the dimensionless pressure, displacements, vertical stress, and horizontal Darcy velocity at several time instances. Overall, the agreement is very good for all quantities. \answer{In particular, the tuning affects the iteration count and not the converged solution, so the profiles are indistinguishable across the sweep and reproducing the analytical solution indicating that the spatial discretization error, discussed further below, dominates the remaining algebraic error. The figure therefore displays only the two extremes, the plain split $\beta=0$ and the theoretical optimum $\beta=\beta_\mathrm{max}$, the latter labeled $\beta>0$ in the legend.}

The pressure plot in Figure~\ref{fig:mandelsolvar}(a) shows the characteristic behavior of Mandel's problem. At early times the pressure is largest in the interior and vanishes at the drained boundary $x=a$. As time evolves, the pressure near the center remains elevated for some time before eventually decaying, reflecting the classical Mandel effect. The displacement plots in Figures~\ref{fig:mandelsolvar}(b)--(c) are also consistent with the expected mechanics: the horizontal displacement increases with $x$, while the vertical displacement is compressive and varies essentially linearly in the vertical direction. Likewise, the vertical total stress in Figure~\ref{fig:mandelsolvar}(d) starts from the nearly uniform initial state and then develops the expected spatial variation induced by the coupled drainage process.

The largest visible discrepancy between analytical and discrete solutions occurs in the horizontal Darcy velocity near the drained boundary $x=a$ at early times; see Figure~\ref{fig:mandelsolvar}(e). This is not unexpected. The discrete velocity is approximated in an $H(\div)$-conforming Raviart--Thomas space, for which the natural control is in terms of fluxes, moments, and divergence, rather than pointwise values. Therefore, near a boundary layer or steep spatial variation, pointwise plots of the velocity may show an overshoot even when the integral behavior of the discrete flux is well captured.

In summary, this benchmark confirms that the proposed method reproduces the analytical Mandel solution accurately for all primary variables. \answer{For the parameter range considered here, the fixed-stress splitting converges reliably for every $\theta$ tested. While the theoretical optimum $\theta=1$ has only a minor influence on the iteration count compared with the untuned scheme, intermediate destabilizations reduce it substantially, as the scan below shows.}

As a final numerical experiment, we investigate how the performance of the splitting scheme changes when the Young's modulus and the applied load are scaled simultaneously. More precisely, we set
\[
E=\gamma_3 E_{\mathrm{ref}},
\qquad
F=\gamma_3 F_{\mathrm{ref}},
\]
with
\[
E_{\mathrm{ref}}=5.94\times 10^9,
\qquad
F_{\mathrm{ref}}=6.0\times 10^8,
\]

\answer{while updating all model parameters depending on \(E\) accordingly and keeping the remaining material parameters of Table~\ref{tab:mandel_parameters} fixed. Since the iteration count of this benchmark is largest during the coupled transient and remains low once the pressure has diffused, we restrict the present scan to the time interval $[0,100]$, so that the reported averages characterize the coupled regime rather than the near-steady tail; the averages of the reference run above, taken over the full time interval $[0,50000]$, are correspondingly lower.} This scaling keeps the solution variables on a broadly comparable scale across the different tests, while the coupling strength changes. The corresponding average numbers of iterations per time step are reported in Table~\ref{tab:mandel_avg_iter_gamma3}.

\answer{\begin{table}[ht!]
\renewcommand{\arraystretch}{1.15}
\centering
\begin{tabular}{ccccccccc}
\toprule
$\gamma_3$ & coupling strength $\tau$ & $k$ & \multicolumn{5}{c}{$\theta=\beta/\beta_\mathrm{max}$} \\
\cmidrule(lr){4-8}
 &  &  & $0$ & $0.25$ & $0.5$ & $0.75$ & $1$ \\
\midrule
\multirow{3}{*}{$10^{2}$} & \multirow{3}{*}{$4.000 \times 10^{-2}$}
   & 1 & 4.0 & 3.8 & 3.2 & \textbf{3.0} & 3.5 \\
 & & 2 & 4.0 & 3.8 & 3.2 & \textbf{3.0} & 3.5 \\
 & & 3 & 4.0 & 3.8 & 3.2 & \textbf{3.0} & 3.5 \\
\midrule
\multirow{3}{*}{$10^{1}$} & \multirow{3}{*}{$4.000 \times 10^{-1}$}
   & 1 & 5.8 & 5.1 & 4.7 & \textbf{3.0} & 5.0 \\
 & & 2 & 5.8 & 5.1 & 4.7 & \textbf{3.0} & 5.0 \\
 & & 3 & 5.8 & 5.1 & 4.7 & \textbf{3.0} & 5.0 \\
\midrule
\multirow{3}{*}{$10^{0}$} & \multirow{3}{*}{$4.000 \times 10^{0}$}
   & 1 & 9.0 & 7.4 & 6.1 & \textbf{3.0} & 7.1 \\
 & & 2 & 9.0 & 7.4 & 6.1 & \textbf{3.0} & 7.1 \\
 & & 3 & 9.0 & 7.4 & 6.1 & \textbf{3.0} & 7.1 \\
\midrule
\multirow{3}{*}{$10^{-1}$} & \multirow{3}{*}{$4.000 \times 10^{1}$}
   & 1 & 9.6 & 8.1 & 6.1 & \textbf{4.4} & 8.1 \\
 & & 2 & 9.6 & 8.1 & 6.1 & \textbf{3.3} & 8.0 \\
 & & 3 & 9.6 & 8.1 & 6.1 & \textbf{3.1} & 8.0 \\
\midrule
\multirow{3}{*}{$10^{-2}$} & \multirow{3}{*}{$4.000 \times 10^{2}$}
   & 1 & 9.2 & 7.5 & \textbf{6.2} & 8.1 & 18.8 \\
 & & 2 & 9.3 & 8.0 & \textbf{6.1} & 6.5 & 11.9 \\
 & & 3 & 9.4 & 8.0 & 6.1 & \textbf{5.0} & 7.9 \\
\midrule
\multirow{3}{*}{$10^{-3}$} & \multirow{3}{*}{$4.000 \times 10^{3}$}
   & 1 & 9.1 & \textbf{7.6} & 8.1 & 14.3 & 121.1 \\
 & & 2 & 9.2 & 7.6 & \textbf{7.4} & 11.9 & 54.9 \\
 & & 3 & 9.1 & 7.4 & \textbf{7.2} & 11.1 & 32.1 \\
\bottomrule
\end{tabular}
\caption{Mandel's problem: average number of iterations per time step on $[0,100]$ for varying $\gamma_3=E/E_{\mathrm{ref}}=F/F_{\mathrm{ref}}$ versus the destabilization fraction $\theta=\beta/\beta_\mathrm{max}$. Here $\theta=0$ is the plain split and $\theta=1$ the theoretical optimum $\beta_\mathrm{max}$; the per-row minimum is in bold.}
\label{tab:mandel_avg_iter_gamma3}
\end{table}}

\answer{Two statements hold across the entire parameter scan. First, every destabilization with $\theta\le\tfrac12$ improves on the plain split at every coupling strength and every order: $\theta=\tfrac14$ reduces the count in all eighteen rows, and $\theta=\tfrac12$ reduces it further in all but the most strongly coupled row at $k=1$, staying between $3.2$ and $8.1$ where the plain split requires $4.0$ to $9.6$. Second, the counts are unimodal in $\theta$, so that each row possesses a single practical minimum, and the ascent beyond it is far steeper than the descent towards it.}

\answer{The location of that minimum, however, moves with the coupling strength. For $\gamma_3\ge10^{-1}$, that is for the weakly and moderately coupled regimes, it sits at $\theta=\tfrac34$ and the gain is pronounced: $3.0$ iterations against $9.0$ for the plain split at $\gamma_3=1$. Beyond that it drifts downwards, to $\theta=\tfrac12$ at $\gamma_3=10^{-2}$ for $k=1,2$ and to $\theta=\tfrac14$ at $\gamma_3=10^{-3}$ for $k=1$, while for $k=3$ the $\theta=\tfrac34$ optimum survives one decade longer. The elevated optimum in the moderately coupled regime reflects the strongly one-dimensional stress character of Mandel's problem, in contrast to the genuinely three-dimensional footing configuration, where the minimum remains pinned at $\theta=\tfrac12$ across three decades of coupling.}

\answer{The theoretical optimum $\theta=1$ is again the least favorable of the destabilized choices once the coupling is strong. It is mildly beneficial for $\gamma_3\ge10^{-1}$, but at $\gamma_3=10^{-2}$ it already exceeds the plain split for $k=1,2$, and at $\gamma_3=10^{-3}$ it reaches $121.1$ iterations against $9.1$ for the plain split at $k=1$. This deterioration is relieved but not removed by increasing the order ($121.1$, $54.9$ and $32.1$ for $k=1,2,3$), consistent with the discrete stability constants entering the practical optimum but not the contraction factor of Lemma~\ref{lemma:splitting}. As in Section~\ref{subsec:footing_3d}, this is a concrete instance of Remark~\ref{remark:optimality}: $\beta_\mathrm{max}$ is the a-priori-robust maximal destabilization, sitting at the edge of the useful range rather than at its centre.}

Thus, in contrast to the previous test with the manufactured solution, the present scaling reveals a more sensitive dependence of the iteration on both the coupling strength and the discretization order. 

\answer{In particular, once the coupling becomes sufficiently strong, the choice $\theta=1$ is no longer uniformly robust, while still converging, across all polynomial orders. The higher-order discretizations appear to handle this regime better, whereas for lower orders the same tuning may even deteriorate the convergence. This suggests that the discretization scheme impacts overall stability not captured by the theory.}

A further notable feature of Table~\ref{tab:mandel_avg_iter_gamma3} is that the untuned scheme remains relatively stable across the full range of $\gamma_3$, with iteration counts staying close to $9$ in the strongly coupled cases. By contrast, 

\answer{the maximal destabilized choice $\theta=1$} shows a much stronger dependence on both $\gamma_3$ and $k$. This indicates that, for this scaled Mandel test, the practical effect of tuning is more delicate than in the manufactured example, and further method characteristics are impacting the convergence rate which have not been picked up in Lemma~\ref{lemma:splitting}.

\answer{Overall, Table~\ref{tab:mandel_avg_iter_gamma3} suggests that negative tuning can improve convergence throughout: a moderate destabilization $\theta\le\tfrac12$ is beneficial in every regime tested.} In particular, for very small $\gamma_3$ the interaction between the parameter scaling, the induced coupling strength, and the spatial discretization becomes more subtle. This experiment with additional emphasis on physically motivated destabilization therefore complements the previous tests by showing that the effect of tuning can depend significantly on the underlying physical regime, consistent with previous analyses of the fixed-stress split for saddle-point formulations of the Biot equations~\cite{both2017numerical}. In addition, as discussed in~\cite{storvik2019optimization,storvik2020fixed}, stability properties of the discretization as the inf-sup constant may impact the performance of the splitting and the optimal tuning parameter.

\answer{\subsection{Manufactured solution on the unit cube}\label{subsec:manufactured_3d}}
We extend the manufactured-solution study of Section~\ref{subsec:manufactured} to the unit cube $\Omega=[0,1]^3$. With the bubble $b(\mathbf x)=x(1-x)\,y(1-y)\,z(1-z)$, which vanishes on $\partial\Omega$, we prescribe
\begin{equation}
\displacement(\mathbf x,t):=t\,\bigl(1,\,1,\,1\bigr)^{\!\top} b(\mathbf x),
\qquad
p(\mathbf x,t):=t\,b(\mathbf x),
\qquad 0\le t\le 1.
\label{eq:manufactured-solution-3d}
\end{equation}
The sources, the homogeneous Dirichlet data for $\displacement$ and $p$, and the initial condition are chosen so that \eqref{eq:manufactured-solution-3d} solves the continuous Biot system in strong form, cf.\ Section~\ref{sec:porousmedia}.

The discretization is the direct analogue of the two-dimensional one, now with three $H(\div)$ stress rows and a three-component discontinuous displacement. The essential difference lies in the rotation: in $d=3$ the weak symmetry constraint carries the three skew components $\operatorname{as}(\totalstress)=(\totalstress_{23}-\totalstress_{32},\ \totalstress_{31}-\totalstress_{13},\ \totalstress_{12}-\totalstress_{21})$, so that the rotation $\vorticity$ is a three-vector ($2d-3=3$), discretized in a continuous vector-valued $\mathbb{P}_k$ space. We report $k=1,2$.

The unit cube is discretized by a structured tetrahedral $N\times N\times N$ mesh ($h\sim N^{-1}$). For the study of the approximation quality of the fully-mixed finite element discretization, we take $\gamma_1=\gamma_2=1$, $\beta=0$, and $\Delta t=1/4$ on $[0,1]$, evaluating the errors at $t=1$; the results are shown in Figure~\ref{fig:analytical_convergencerates_3d}.

\begin{figure}[!ht]
    \centering
    \begin{subfigure}[b]{0.49\textwidth}
        \centering
        \includegraphics[width=\textwidth]{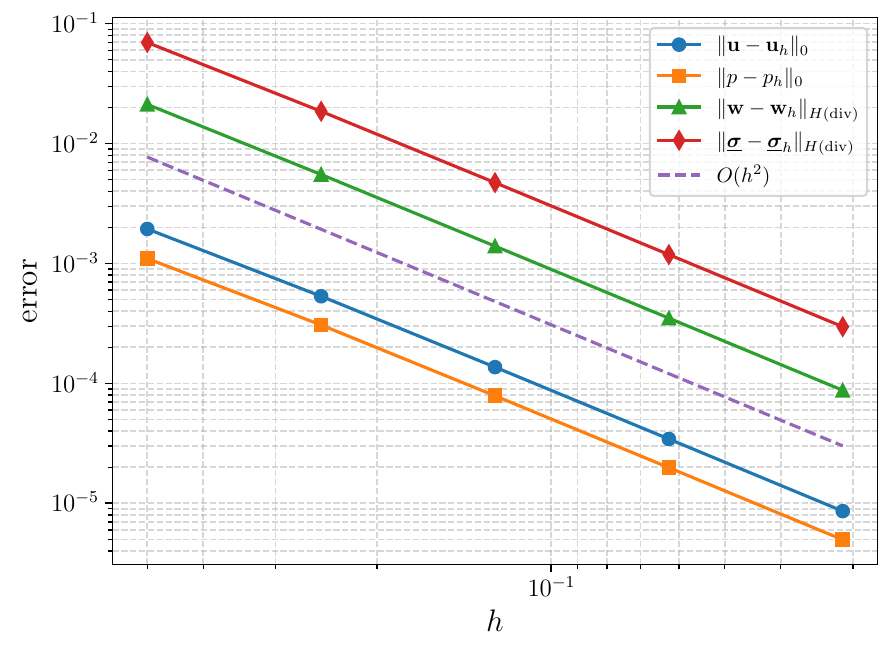}
        \caption{$k=1$}
    \end{subfigure}
    \hfill
    \begin{subfigure}[b]{0.49\textwidth}
        \centering
        \includegraphics[width=\textwidth]{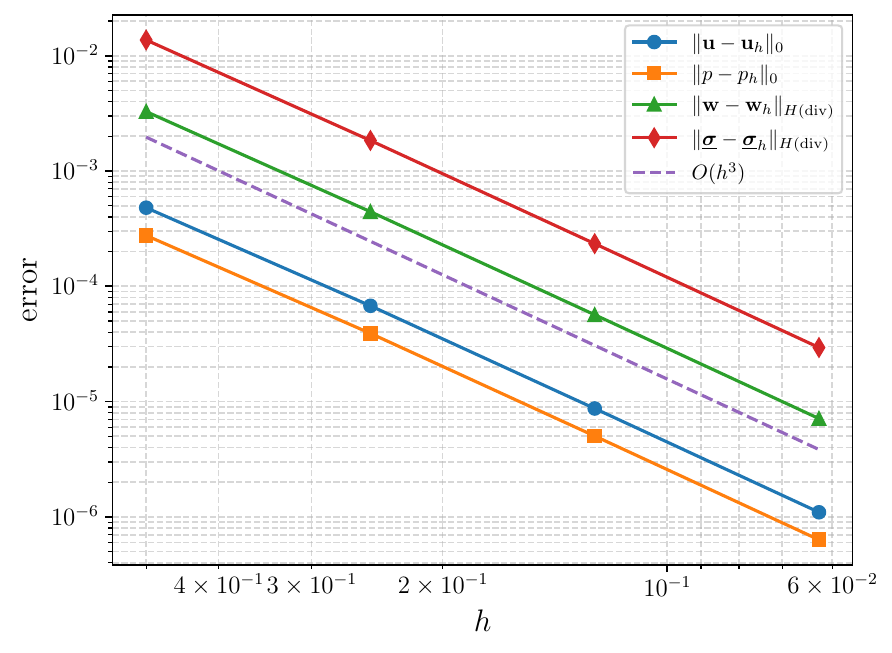}
        \caption{$k=2$}
    \end{subfigure}
    \caption{Spatial convergence for the three-dimensional manufactured solution \eqref{eq:manufactured-solution-3d} on the unit cube with $\gamma_1=\gamma_2=1$ and $\beta=0$. The errors are evaluated at the final time $t=1$ for discrete space orders $k=1,2$.}
    \label{fig:analytical_convergencerates_3d}
\end{figure}

For both $k=1$ and $k=2$ the errors of all primary variables decay at the optimal rate $\mathcal{O}(h^{k+1})$, namely displacement and pressure in $L^2(\Omega)$ and velocity and total stress in $H(\div,\Omega)$, with the total stress reaching order $k+1$ in $H(\div)$ since $\div\mathrm{RT}_k=\mathbb{P}_k$. The curves stay straight over the whole tested range, with no tolerance-induced flattening, confirming the optimal approximation behaviour already observed in two dimensions.

We next repeat the two parameter scans of Section~\ref{subsec:manufactured}, using the same two-parameter family, stopping criterion~\eqref{eq:stopping-criterion}, and time step, and comparing the untuned ($\beta=\beta_\mathrm{FS}$) and tuned ($\beta=\beta_\mathrm{max}$) splits for $k=1,2$. Since the counts are mesh-independent (Lemma~\ref{lemma:splitting}), a fixed coarse mesh suffices. The coupling strength is $\tau=1$ at the reference choice $\gamma_1=\gamma_2=1$.

Table~\ref{tab:manufactured_gamma_scan_3d} varies $\gamma_1$ at $\gamma_2=1$, so that $\tau=1/\gamma_1$ ranges over $[10^{-1},10^{3}]$. The untuned count grows from about $4$ to about $168$ (for $k=1$), and tuning helps increasingly with the coupling: at $\gamma_1=10^{-3}$ it reduces the count by about $46\%$ ($168.50\to91.25$ for $k=1$, $153.50\to82.75$ for $k=2$), while at weak coupling the two variants coincide. As in two dimensions the counts barely depend on $k$.

\begin{table}[ht!]
\renewcommand{\arraystretch}{1.15}
\centering
\begin{tabular}{cccccc}
\toprule
$\gamma_1$ & coupling strength $\tau$ & \multicolumn{2}{c}{$k=1$} & \multicolumn{2}{c}{$k=2$} \\
 &  & $\beta_\mathrm{FS}$ & $\beta_\mathrm{max}$ & $\beta_\mathrm{FS}$ & $\beta_\mathrm{max}$ \\
\midrule
10 & $1.000 \times 10^{-1}$ & 4.25 & 4.00 & 4.25 & 4.00 \\
1 & $1.000 \times 10^{0}$ & 7.00 & 5.50 & 7.00 & 5.50 \\
0.1 & $1.000 \times 10^{1}$ & 16.75 & 10.50 & 16.75 & 10.50 \\
0.01 & $1.000 \times 10^{2}$ & 52.75 & 29.00 & 52.50 & 29.00 \\
0.001 & $1.000 \times 10^{3}$ & 168.50 & 91.25 & 153.50 & 82.75 \\
\bottomrule
\end{tabular}
\caption{Three-dimensional manufactured solution: average number of iterations per time step for varying $\gamma_1$ with fixed $\gamma_2=1$.}
\label{tab:manufactured_gamma_scan_3d}
\end{table}

Varying $\gamma_2$ at $\gamma_1=1$ (Table~\ref{tab:manufactured_scaling2_scan_3d}) drives the solid towards incompressibility (Poisson ratio approaching $1/2$) and lowers the permeability, resulting in weak coupling and consequently small iteration counts, as observed in the two-dimensional case.

\begin{table}[ht!]
\renewcommand{\arraystretch}{1.15}
\centering
\begin{tabular}{ccccccc}
\toprule
$\gamma_2$ & Poisson ratio & coupling strength $\tau$ & \multicolumn{2}{c}{$k=1$} & \multicolumn{2}{c}{$k=2$} \\
 &  &  & $\beta_\mathrm{FS}$ & $\beta_\mathrm{max}$ & $\beta_\mathrm{FS}$ & $\beta_\mathrm{max}$ \\
\midrule
10000 & 0.49995 & $1.667 \times 10^{-4}$ & 3.00 & 4.00 & 3.00 & 4.00 \\
1000 & 0.4995 & $1.666 \times 10^{-3}$ & 4.00 & 4.75 & 4.00 & 4.75 \\
100 & 0.49505 & $1.656 \times 10^{-2}$ & 4.25 & 5.25 & 4.25 & 5.25 \\
10 & 0.45455 & $1.562 \times 10^{-1}$ & 6.00 & 6.00 & 6.00 & 6.00 \\
1 & 0.25 & $1.000 \times 10^{0}$ & 7.00 & 5.50 & 7.00 & 5.50 \\
0.1 & 0.04545 & $2.174 \times 10^{0}$ & 5.00 & 4.25 & 5.00 & 4.25 \\
0.01 & 0.00495 & $2.463 \times 10^{0}$ & 4.00 & 3.50 & 4.00 & 3.50 \\
\bottomrule
\end{tabular}
\caption{Three-dimensional manufactured solution: average number of iterations per time step for varying $\gamma_2$ with fixed $\gamma_1=1$.}
\label{tab:manufactured_scaling2_scan_3d}
\end{table}

In both scans the counts are set by the coupling strength and the tuning, essentially independently of the polynomial degree, exactly as in two dimensions. The maximal destabilization $\beta_\mathrm{max}$ helps most under strong coupling and is neutral to marginally adverse under weak coupling, so that a higher discretization order results in higher spatial accuracy without affecting the relative cost of the iterative coupling.

\answer{\subsection{A three-dimensional footing problem}\label{subsec:footing_3d}}
To test the splitting on a genuinely three-dimensional problem with mixed boundary conditions, and thereby to probe the boundary-condition independence of the analysis (Lemma~\ref{lemma:splitting}, Remark~\ref{remark:bc}), we consider a 3D footing problem, cf.\ cases I-III in~\cite{both2019gradient}. On the unit cube the solid is clamped at the base, loaded by a ramped normal traction on a central patch of the top face, drained on the remaining boundary, and sealed on the loaded parts, with no body force:
\[
\displacement=\mathbf{0} \ \text{on}\ [0,1]^2\times\{0\},
\qquad
\totalstress\mathbf n = 10^{9}\,t\,\mathbf e_3 \ \text{on}\ [0.25,0.75]^2\times\{1\},
\qquad
\totalstress\mathbf n = 0 \ \text{on the rest of}\ \partial\Omega,
\]
\[
\velocity\cdot\mathbf n = 0 \ \text{on}\ [0,1]^2\times\{0\} \cup [0.25,0.75]^2\times\{1\},
\qquad
p = 0 \ \text{on the rest of}\ \partial\Omega.
\]
In the dual-mixed finite element setting the boundary traction $\totalstress\mathbf n$ is essential on the stress space. It is homogeneous on the sides and the unloaded top, and non-homogeneous on the patch, where it enters as a time-dependent stress lifting on the mechanics right-hand side. The no-flow condition is essential on the flux space, whereas the clamping $\displacement=\mathbf{0}$ and the drainage $p=0$ are natural and, being homogeneous, drop out. The essential constraints are imposed by restricting the stress and flux blocks to their free degrees of freedom.

We adopt the material properties of~\cite{both2019gradient} ($\nu=0.2$, $\alpha=1$, $c_0=10^{-11}$, and $\kappa=10^{-13}$) and sweep the coupling through the Young modulus $E\in\{10^{9},10^{10},10^{11},10^{12}\}$ (smaller $E$, stronger coupling), with resulting coupling strength $\tau\in [0.18, 180]$. These material parameters generate assembled stiffness matrices whose entries span roughly 22 orders of magnitude. To circumvent scaling issues, all computations are therefore carried out in non-dimensional form by a change of units to moderate size.

We take five implicit-Euler steps of $\Delta t=0.1$ on a fixed structured tetrahedral mesh that resolves the patch edges, and report $k=1,2$. As for the Mandel problem, we test different values for the destabilization, again sweeping the fraction $\theta:=\beta/\beta_\mathrm{max}\in\{0,\tfrac14,\tfrac12,\tfrac34,1\}$, so that $\theta=0$ is the plain split and $\theta=1$ the theoretical optimum $\beta_\mathrm{max}$, which Lemma~\ref{lemma:splitting} identifies as the maximal destabilization with guaranteed robustness. The average iteration counts are reported in Tables~\ref{tab:footing_dim_beta_grid_3d_k1}--\ref{tab:footing_dim_beta_grid_3d_k2} for k=1,2.

\begin{table}[ht!]
\renewcommand{\arraystretch}{1.15}
\centering
\begin{tabular}{ccccccc}
\toprule
$E$ & coupling strength $\tau$ & \multicolumn{5}{c}{$\theta=\beta/\beta_\mathrm{max}$} \\
\cmidrule(lr){3-7}
 &  & $0$ & $0.25$ & $0.5$ & $0.75$ & $1$ \\
\midrule
$1 \times 10^{12}$ & $1.800 \times 10^{-1}$ & 7.00 & 6.40 & \textbf{6.20} & 7.00 & 7.20 \\
$1 \times 10^{11}$ & $1.800 \times 10^{0}$ & 13.20 & 11.80 & \textbf{10.40} & 12.40 & 17.60 \\
$1 \times 10^{10}$ & $1.800 \times 10^{1}$ & 20.40 & 17.80 & \textbf{15.20} & 22.60 & 92.20 \\
$1 \times 10^{9}$ & $1.800 \times 10^{2}$ & 22.40 & 19.40 & \textbf{16.40} & 26.40 & 647.20 \\
\bottomrule
\end{tabular}
\caption{Three-dimensional footing benchmark with the dimensional parameters of~\cite{both2019gradient}: average number of fixed-stress iterations per time step versus the destabilization fraction $\theta=\beta/\beta_\mathrm{max}$ for $k=1$. Here $\theta=0$ is the plain split and $\theta=1$ the theoretical optimum $\beta_\mathrm{max}$; the practical minimum is in bold.}
\label{tab:footing_dim_beta_grid_3d_k1}
\end{table}

\begin{table}[ht!]
\renewcommand{\arraystretch}{1.15}
\centering
\begin{tabular}{ccccccc}
\toprule
$E$ & coupling strength $\tau$ & \multicolumn{5}{c}{$\theta=\beta/\beta_\mathrm{max}$} \\
\cmidrule(lr){3-7}
 &  & $0$ & $0.25$ & $0.5$ & $0.75$ & $1$ \\
\midrule
$1 \times 10^{12}$ & $1.800 \times 10^{-1}$ & 7.00 & 6.40 & \textbf{6.20} & 7.00 & 7.20 \\
$1 \times 10^{11}$ & $1.800 \times 10^{0}$ & 13.20 & 11.80 & \textbf{10.20} & 12.40 & 17.40 \\
$1 \times 10^{10}$ & $1.800 \times 10^{1}$ & 20.40 & 17.80 & \textbf{15.00} & 22.20 & 84.80 \\
$1 \times 10^{9}$ & $1.800 \times 10^{2}$ & 22.40 & 19.40 & \textbf{16.40} & 26.20 & 542.00 \\
\bottomrule
\end{tabular}
\caption{Three-dimensional footing benchmark with the dimensional parameters of~\cite{both2019gradient}: average number of fixed-stress iterations per time step versus the destabilization fraction $\theta=\beta/\beta_\mathrm{max}$ for $k=2$. Here $\theta=0$ is the plain split and $\theta=1$ the theoretical optimum $\beta_\mathrm{max}$; the practical minimum is in bold.}
\label{tab:footing_dim_beta_grid_3d_k2}
\end{table}

Four observations stand out. First, the plain split $\theta=0$ is far more robust here than for the problem with the manufactured solution, its count rising only from $7.00$ to $22.40$ across three orders of coupling strength. On this configuration, destabilization is still beneficial and can mildly reduce iteration counts. Second, the counts are unimodal in $\theta$ with a minimum at $\theta=\tfrac12$, and the location of that minimum does not move with the coupling strength or with the polynomial degree; the gain over the plain split grows with the coupling, from about $11\%$ at $\tau\approx0.18$ to about $27\%$ at $\tau\approx180$. Third, the ascent beyond the minimum is steep and one-sided: at $\theta=\tfrac34$ the scheme is already slower than the plain split once $\tau\gtrsim18$, and at $\theta=1$ it degrades from neutral at weak coupling ($7.20$ versus $7.00$) to a significant increase of iterations at the strongest coupling ($647.20$ versus $22.40$ for $k=1$). This is a concrete instance of Remark~\ref{remark:optimality}: $\beta_\mathrm{max}$ is the a priori-robust maximal destabilization, and on this boundary-condition and stress configuration it sits at the very edge of the useful range rather than at its centre. Fourth, the counts are essentially order-independent for $\theta\le\tfrac34$, agreeing to within $0.4$ iterations between $k=1$ and $k=2$; the only visible order effect is a partial relief of the over-destabilized column $\theta=1$ at strong coupling ($647.20\to542.00$ and $92.20\to84.80$), consistent with the discrete stability constants entering the practical optimum but not the theoretical contraction factor.

Finally, this example shows that the splitting converges reliably under genuine mixed boundary conditions (Remark~\ref{remark:bc}), and that the practically optimal destabilization parameter lies in between the plain split and the maximal value derived through the theory (Remark~\ref{remark:optimality}).

\section{Concluding remarks\label{sec:conclusion}}

In this work, we considered the fully-mixed formulation of the Biot equations, describing two-way coupled flow and deformation in porous media. This formulation strictly enforces the conservation of mass and linear momentum. Moreover, the cross-physics coupling is symmetric which we highlight two-fold. We highlight the flexibility in designing families of mixed finite elements, solely requiring the stability within the single subphysics, and then resulting in overall inf-sup stability for the coupled problem. Moreover, the symmetric coupling allows for effective iterative solution by employing a naive decoupling of flow and deformation, which can be identified with the common fixed-stress split. However, in addition we theoretically show that destablization, i.e., the application of negative (contrary to positive) stabilization, results in improved convergence, which is consistent with previous studies of the fixed-stress split for the two-field formulation reporting that the full stabilization associated with "fixing the stress" can be reduced without sacrificing performance~\cite{mikelic2013convergence,BothBorregalesNordbottenKumarRadu:2017}. In line with previous observations, picking the optimal tuning depends on various problem characteristics~\cite{both2017numerical}, suggesting optimization either based informed by theory~\cite{storvik2019optimization} or data-driven approaches~\cite{zabegaev2024automated}. Our analysis structurally differs from previous analyses of splitting schemes for symmetrically coupled problems~\cite{nuca2024splitting,brun2020iterative}, opening new views on the numerical analysis of such.

\section*{Acknowledgments}
JWB acknowledges support from the FRIPRO project “Unlocking maximal geological CO2 storage through experimentally validated mathematical modeling of dissolution and convective mixing (TIME4CO2)”, grant nr. 355188, funded by the Research Council of Norway.

\bibliography{ref} 
\end{document}

%% file: defs.tex
\renewcommand{\div}{\operatorname{div}}
\newcommand{\tr}{\operatorname{tr}}
\newcommand{\curl}{\operatorname{curl}}

\newcommand{\storage}{{c_0}}
\newcommand{\bI}{\underline{\bf I}}
\newcommand{\tri}{{\mathcal T}}

\newcommand{\velocity}{{\bf { w}}}
\newcommand{\integratedvelocity}{{\bf {\tilde w}}}
\newcommand{\stress}{\underline{\boldsymbol{\sigma}}_\mathrm{eff}}
\newcommand{\totalstress}
{\underline{\boldsymbol{\sigma}}}

\newcommand{\displacement}{{\bf{u}}}
\newcommand{\vorticity}{\boldsymbol{\xi}}
\newcommand{\strain}{\underline{\boldsymbol \varepsilon}(\displacement)}
\newcommand{\testvorticity}{{\boldsymbol \eta}}
\newcommand{\testvelocity}{{\bf z}}
\newcommand{\teststress}{\underline{\boldsymbol{\tau}}}
\newcommand{\testdisplacement}{{\bf{v}}}
\newcommand{\testpressure}{q}

\newcommand{\integratedvelocityT}{{\bf {\tilde w}}_\tri}
\newcommand{\totalstressT}{\underline{\boldsymbol{\sigma}}_\tri}
\newcommand{\displacementT}{{\bf{u}}_\tri}
\newcommand{\vorticityT}{\boldsymbol{\xi}_\tri}
\newcommand{\pressureT}{p_\tri}

\newcommand{\testvorticityT}{{\boldsymbol \eta}_\tri}
\newcommand{\testvelocityT}{{\bf z}_\tri}
\newcommand{\teststressT}{\underline{\boldsymbol{\tau}}_\tri}
\newcommand{\testdisplacementT}{{\bf{v}}_\tri}
\newcommand{\testpressureT}{q_\tri}

\newcommand{\Hdiv}{\mathbb{H}(\operatorname{div}, \Omega)}
\newcommand{\hdiv}{{\bf {H}}(\operatorname{div}, \Omega)}

\newcommand{\Stress}{{\underline{\boldsymbol{\Sigma}}}}
\newcommand{\Pressure}{\mathrm{P}}
\newcommand{\Velocity}{{\bf{W}}}
\newcommand{\Displacement}{\bf{U}}
\newcommand{\Vorticity}{\boldsymbol\Xi}

\newcommand{\StressT }{{\underline{\boldsymbol{\Sigma}}_\tri}}
\newcommand{\VelocityT }{{\bf{W}}_\tri}
\newcommand{\DisplacementT }
{{\bf{U}}_\tri}
\newcommand{\PressureT }{\mathrm{P}_\tri}
\newcommand{\VorticityT }{\boldsymbol\Xi_\tri}

\newcommand{\RT}
{\boldsymbol{R} \boldsymbol{T}}
\newcommand{\ABF}
{\boldsymbol{A} \boldsymbol{B} \boldsymbol{F}}
\newcommand{\BDM}{\boldsymbol{B} \boldsymbol{D}
\boldsymbol{M}}
\newcommand{\BDFM}{\boldsymbol{B} \boldsymbol{D}\boldsymbol{F}
\boldsymbol{M}}
\newcommand{\BDDF}{\boldsymbol{B} \boldsymbol{D}\boldsymbol{D}
\boldsymbol{F}}

\newtheorem{remark}{Remark}

\newcommand{\tuplep}{\boldsymbol{\mathfrak{p}}}
\newcommand{\tupleq}{\boldsymbol{\mathfrak{q}}}
\newcommand{\tupleu}{\boldsymbol{\mathfrak{u}}}
\newcommand{\tuplev}{\boldsymbol{\mathfrak{v}}}

\newcommand{\tupleuT}{\tupleu_\tri}
\newcommand{\tuplevT}{\tuplev_\tri}
\newcommand{\tuplepT}{\tuplep_\tri}
\newcommand{\tupleqT}{\tupleq_\tri}

\newcommand{\tupleQ}{\boldsymbol{\mathfrak{Q}}}
\newcommand{\tupleV}{\boldsymbol{\mathfrak{U}}}
\newcommand{\tupleVT}{\tupleV_\tri}
\newcommand{\tupleQT}{\tupleQ_\tri}

\newcommand{\PsiT}{\boldsymbol{\Psi}_\tri}